\setlist[enumerate]{label={\arabic*.}}
\setlist[description]{font=\normalfont\slshape}
\newcommand{\refcheckize}[1]{%
  \expandafter\let\csname @@\string#1\endcsname#1%
  \expandafter\DeclareRobustCommand\csname relax\string#1\endcsname[1]{%
    \csname @@\string#1\endcsname{##1}\wrtusdrf{##1}}%
  \expandafter\let\expandafter#1\csname relax\string#1\endcsname
}
\theoremstyle{plain}
\newtheorem{theorem}{Theorem}
\newtheorem{lemma}[theorem]{Lemma}
\newtheorem{proposition}[theorem]{Proposition}
\newtheorem{corollary}[theorem]{Corollary}
\theoremstyle{definition}
\newtheorem{remark}[theorem]{Remark}
\newtheorem{example}[theorem]{Example}
\numberwithin{theorem}{section}
\numberwithin{equation}{section}
\renewcommand\bar{\overline}
\newcommand{\eps}{\epsilon}
\renewcommand\subset{\subseteq}
\renewcommand\supset{\supseteq}
\renewcommand\phi{\varphi}
\newcommand\floor[1]{\left\lfloor{#1}\right\rfloor}
\newcommand\br[1]{\left(#1\right)}
\def\nsgp{\trianglelefteq}
\newcommand\opr[1]{\operatorname{#1}}
\def\F{\mathbf{F}}
\def\P{\mathbf{P}}
\def\rad{\opr{rad}}  
\def\Comm{\opr{Comm}}
\def\cod{\opr{cod}}
\def\gg{\mathfrak{g}}
\def\hh{\mathfrak{h}}
\def\II{\mathcal{I}}
\def\JJ{\mathcal{J}}
\begin{document}

\title{Probabilistically nilpotent groups of class two}

\author{Sean Eberhard}
\address{Sean Eberhard, Centre for Mathematical Sciences, Wilberforce Road, Cambridge CB3~0WB, UK}
\email{eberhard@maths.cam.ac.uk}

\author{Pavel Shumyatsky}
\address{Pavel Shumyatsky, Department of Mathematics, University of Brasilia, Brasilia-DF, 70910-900 Brazil}
\email{pavel@unb.br}

\thanks{The first author has received funding from the European Research Council (ERC) under the European Union’s Horizon 2020 research and innovation programme (grant agreement No. 803711).
The second author was supported by FAPDF and CNPq-Brazil.}

\begin{abstract}
    For $G$ a finite group, let $d_2(G)$ denote the proportion of triples $(x, y, z) \in G^3$ such that $[x, y, z] = 1$.
    We determine the structure of finite groups $G$ such that $d_2(G)$ is bounded away from zero:
    if $d_2(G) \geq \eps > 0$,
    $G$ has a class-4 nilpotent normal subgroup $H$ such that $[G : H] $ and $|\gamma_4(H)|$ are both bounded in terms of $\eps$.
    We also show that if $G$ is an infinite group whose commutators have boundedly many conjugates,
    or indeed if $G$ satisfies a certain more general commutator covering condition,
    then $G$ is finite-by-class-3-nilpotent-by-finite.
\end{abstract}

\maketitle

\setcounter{tocdepth}{1}
\tableofcontents

\section{Introduction}

If $G$ is a finite group, we define the \emph{class-$k$ nilpotency degree} by
\[
    d_k(G) = |\{(x_1, \dots, x_{k+1}) \in G^{k+1} : [x_1, \dots, x_{k+1}] = 1\}| / |G^{k+1}|.
\]
In probablistic notation,
\[
    d_k(G) = \P_{x_1, \dots, x_{k+1} \in G}([x_1, \dots, x_{k+1}] = 1).
\]
Then $G$ is nilpotent of class $k$ if and only if $d_k(G) = 1$,
so class-$k$ nilpotency degree is a statistical relaxation of class-$k$ nilpotency.
We may call $G$ \emph{probabilistically nilpotent of class $k$} if $d_k(G)$ is bounded away from zero
(so $G$ has statistically significant class-$k$ nilpotency).

For example, $d_1(G)$ is the probability that two random elements of $G$ commute,
sometimes called the \emph{commuting probability} of $G$.
It is well-known that $d_1(G) \leq 5/8$ for any nonabelian group $G$.
Another important result, less well known, is a theorem of Peter Neumann
which states that if $d_1(G)$ is bounded away from zero
then $G$ has a subgroup $H$
such that $[G:H]$ and $|H'|$ are both bounded.
Thus a finite probabilistically abelian group is bounded-by-abelian-by-bounded.%
\footnote{
Here and throughout the paper we say ``$(a, b, c, \dots)$-bounded'' to mean ``bounded by a function of $a, b, c, \dots$ only''.
Where the parameters $a, b, c, \dots$ are clear from the context we often just say ``bounded''.
Also, if we say a group is $A$-by-$B$-by-$\cdots$-by-$Z$, we mean there is a sequence of subgroups $1 = G_0 \nsgp G_1 \nsgp \cdots \nsgp G_n = G$
such that $G_1/G_0$ is $A$, $G_2/G_1$ is $B$, ..., $G_n/G_{n-1}$ is $Z$.}

It is natural to ask for an analogous qualitative description of probabilistically nilpotent groups of class $k$.
Essentially this question has been asked by several authors, including
Shalev~\cite{shalev}, Martino, Tointon, Valiunas, and Ventura~\cite{MTVV}*{Question~1.10}, and Green [personal communication].
If $G$ has boundedly many generators then it was shown by Shalev~\cite{shalev}*{Theorem~1.1} that $G$ is class-$k$-by-bounded.
If not, the structure of $G$ is much less transparent,
and certainly any structure theorem must at least include the class of bounded-by-class-$k$-by-bounded groups.

In this paper we consider finite groups $G$ with $d_2(G)$ bounded away from zero.
We show that $G$ need not be bounded-by-class-2-by-bounded (as might be hoped),
nor even class-3-by-bounded,
but we will show that $G$ must be bounded-by-class-3-by-bounded.

\begin{theorem}
    \label{thm:d2-theorem}
    Let $G$ be a finite group such that $d_2(G) \geq \eps > 0$.
    Then $G$ has a subgroup $H$ of nilpotency class at most 4
    such that $[G:H]$ and $|\gamma_4(H)|$ are both $\eps$-bounded.
\end{theorem}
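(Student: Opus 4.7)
The starting point is the Fubini identity
\[
d_2(G) = \E_{x,y \in G} \br{|C_G([x,y])|/|G|} = \E_{x,y}\br{1/|[x,y]^G|},
\]
so by Markov's inequality a fraction at least $\eps/2$ of pairs $(x,y) \in G^2$ yield a commutator $[x,y]$ with at most $K := 2/\eps$ conjugates in $G$. My first aim would be to convert this distributional statement into a structural one, namely to find a bounded-index subgroup $H_0 \le G$ in which every commutator has $G$-conjugacy class of bounded size. Because the fibres of $x \mapsto [x,y]$ are cosets of $C_G(y)$, a Fubini/pigeonhole argument isolates many ``good'' $y$'s for which $[x,y]$ has at most $K$ conjugates for a positive proportion of $x$; intersecting $C_G(y)$ over a bounded number of such representatives (and symmetrically over good $x$'s) should produce the required $H_0$, perhaps after enlarging $K$ to a bounded function of $\eps$.

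Inside $H_0$ the problem reduces to the structure of a finite group whose commutators all have boundedly many conjugates --- a quantitative finite counterpart of the infinite commutator-covering hypothesis mentioned in the abstract. Here I would apply a BFC-style argument in the spirit of B.\,H.\ Neumann: if every commutator has at most $K$ conjugates in $H_0$, then the normal subgroup generated by commutators should have a bounded-order derived subgroup, and a careful application of commutator identities (Hall--Witt, Jacobi) should push the unavoidable finite ``error'' up into $\gamma_4$ of a further bounded-index subgroup $H \le H_0$. The outcome should be exactly the stated conclusion: $H$ has nilpotency class at most $4$ and $|\gamma_4(H)|$ is $\eps$-bounded.

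The principal obstacle will be the first step --- upgrading ``many commutators have bounded conjugacy class'' to ``every commutator inside a bounded-index subgroup has bounded conjugacy class''. The one-shot Markov-plus-pigeonhole argument I sketched is likely insufficient on its own and will need to be combined with an iterative refinement, in which one repeatedly restricts to centralizers of elements with small conjugacy class, re-applies Markov inside the smaller subgroup, and eventually stabilises. A secondary difficulty is the appearance of class $4$ rather than $3$ in the conclusion: the bounded part $\gamma_4(H)$ need not vanish, so the commutator calculus in the second step has to be done modulo a bounded (but nontrivial) central subgroup, which is strictly more delicate than the corresponding step in Neumann's original treatment of $d_1$.
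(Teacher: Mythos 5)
Your opening identity $d_2(G) = \E_{x,y}\bigl(1/|[x,y]^G|\bigr)$ and the Markov step are correct and do capture the right starting point, but there is a genuine gap at the very first structural step, and it is the crux of the problem. You want a bounded-index $H_0 \leq G$ in which \emph{every} commutator has boundedly many conjugates (i.e. the covering condition $\Comm(H_0,H_0) \subset B$ with $S = \{1\}$). The Markov-plus-centralizer-intersection argument you sketch cannot deliver that: fixing good $y$'s and intersecting $C_G(y)$ controls individual elements, but the set of $x$ for which $[x,y]$ is small is not a subgroup, and restricting to an intersection of centralizers kills the commutator entirely rather than bounding its class. What this line of reasoning \emph{does} give, after being made precise along the lines of the paper's Theorem~\ref{thm:metric-neumann}, is the weaker conclusion that $\Comm(H,H)$ is covered by boundedly many balls in the conjugacy-class metric, i.e. $\Comm(H,H) \subset BS$ for a bounded set $S$ that may well contain elements with huge conjugacy classes. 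There is no obvious way to shrink $S$ to $\{1\}$ a priori; the paper works throughout with the bounded-$|S|$ covering condition precisely because the stronger reduction is not available.

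The second step has a larger gap. Granting yourself the $S=\{1\}$ situation and invoking the BFC-style result from \cite{dieshu} does give that $H_0$ is bounded-by-metabelian, but the jump from there to ``$\gamma_4$ bounded in a further bounded-index subgroup'' is where almost all of the paper's work lives, and it cannot be dispatched by ``careful application of Hall--Witt and Jacobi.'' The paper needs three further ingredients that your outline omits entirely: (i) a reduction from bounded derived length to bounded nilpotency class, which goes through Hall's nilpotency criterion, an asymmetric form of P.\ Neumann's theorem, an Engel-element argument in $p$-groups, and a separate coprime case (Section~\ref{sec:to-bounded-class}); (ii) a local Baer theorem (\Cref{thm:local-baer}) to push the bounded part down to class~$4$; and (iii), crucially, the structure theorem for biased multilinear maps (\Cref{thm:bias-structure}) together with the rank-refinement Lemma~\ref{lem:rank-refinement} to analyze the $4$-linear map $[x,y,z,w]$ on the associated graded Lie ring and conclude that it factors through a bounded quotient on one pair of variables. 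Without that multilinear analysis the Jacobi identity alone will not force $\gamma_4$ to be bounded; indeed the example in Section~2 of the paper is exactly a class-$4$ group built from a bilinear form where naive commutator manipulation gives nothing, and only the bias/rank analysis extracts the bounded-index class-$3$ structure.
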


Further details about the structure of $G$ can be extracted from the proof of the theorem. It is noteworthy that if $T$ is the trilinear map $(H / H')^3 \to \gamma_3(H) / \gamma_4(H)$ induced by the triple commutator $[x, y, z]$, then there is an expression for the map $T$ of the form
\[
    T(x, y, z) = A(a(x, y), z) + B(b(x, z), y) + C(c(y, z), x),
\]
where $A, B, C, a, b, c$ are bilinear maps and $a, b, c$ have $\eps$-bounded codomains.
Conversely, if $G$ has this structure then $d_2(G)$ is bounded away from zero by a function of $[G:H]$, $|\gamma_4(H)|$,
and the size of the codomains of $a, b, c$.

Much of the proof of \Cref{thm:d2-theorem} consists of a study of groups satisfying a certain commutator covering condition.
If $X$ and $Y$ are subsets of a group $G$, let $\Comm(X, Y)$ denote the \emph{set} of commutators $\{[x, y] : x \in X, y \in Y\}$.
Let $n \geq1$, let $B = \{x \in G: |x^G| \leq n\}$, and let $S \subset G'$.
Then we say $G$ satisfies the \emph{commutator covering condition} (with given $n$ and $S$) if
\begin{equation}
    \label{eq:covering-condition}
    \Comm(G, G) \subset B S.
\end{equation}
We will prove that a finite group with $d_2(G)$ bounded away from zero has a bounded-index subgroup satisfying the covering condition
with bounded $n$ and $|S|$, and conversely the covering condition implies $d_2(G)$ is bounded away from zero by a function of $n$ and $|S|$,
so for finite groups the condition that $d_2(G)$ is bounded away from zero and the covering condition are loosely equivalent.
On the other hand the covering condition makes sense for infinite as well as finite groups,
and is satisfied by groups with boundedly finite conjugacy classes of commutators (the case $S = \{1\}$), which were studied in \cite{dieshu}.
We will prove the following structure theorem for groups (finite or infinite) satisfying the commutator covering condition.

\begin{theorem}
    \label{thm:covered-groups}
    Let $G$ be a group satisfying the commutator covering condition \eqref{eq:covering-condition}.
    Then $G$ has a subgroup $H$ of nilpotency class at most 4 such that $[G:H]$ and $|\gamma_4(H)|$
    are both finite and $(n, |S|)$-bounded.
\end{theorem}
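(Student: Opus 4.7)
The plan is to reduce Theorem~\ref{thm:covered-groups} to the $S = \{1\}$ case treated in \cite{dieshu}, by first bounding the normal closure $N := \langle S \rangle^G$ of $S$ in $G$.

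The key observation for bounding $N$ is that, for each $s \in S$ and $g \in G$, the covering condition \eqref{eq:covering-condition} applied to the commutator $[g, s]$ gives $[g, s] = b t$ with $b \in B$ and $t \in S$, whence
\[
    s^g \in s \cdot B \cdot S \qquad \text{for every } g \in G.
\]
Quotienting by the normal subgroup $\langle B \rangle$ (normal since $B$ is closed under $G$-conjugation), each $\tilde s$ in $\tilde G := G / \langle B \rangle$ therefore has at most $|S|$ conjugates, so the induced commutator set $\Comm(\tilde G, \tilde G) \subset \tilde S$ has size at most $|S|$. By the classical theorem that boundedly many commutator values implies a bounded derived subgroup, $|\tilde G'|$ is $|S|$-bounded, and hence $|N / (N \cap \langle B \rangle)|$ is bounded. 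A careful analysis of the FC-subgroup $\langle B \rangle$---exploiting that each of its generators has $G$-conjugacy class of size at most $n$---then controls $N \cap \langle B \rangle$ and yields that $|N|$ is $(n, |S|)$-bounded.

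With $|N|$ bounded, pass to the quotient $\bar G := G/N$. The image $\bar B$ of $B$ still consists of elements with at most $n$ conjugates, and the covering condition descends to $\Comm(\bar G, \bar G) \subset \bar B$, which is the $S = \{1\}$ case. By \cite{dieshu}, there is a subgroup $\bar H \leq \bar G$ of $(n, |S|)$-bounded index with nilpotency class at most $4$ and $|\gamma_4(\bar H)|$ bounded. Lifting to $H_0 \leq G$ (the preimage of $\bar H$), we have $[G:H_0]$ bounded, $\gamma_5(H_0) \subset N$, and $|\gamma_4(H_0)|$ bounded (as an extension of a subgroup of $N$ by $\gamma_4(\bar H)$). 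To reduce the nilpotency class from at most $5$ down to at most $4$, set $H := C_{H_0}(\gamma_4(H_0))$: since $\gamma_4(H_0)$ is finite of bounded order, $[H_0:H]$ is bounded; and inside $H$ we have $\gamma_4(H_0) \subset Z(H)$, so $\gamma_5(H) = [\gamma_4(H), H] \subset [\gamma_4(H_0), H] = 1$, giving $H$ of class at most $4$, with $|\gamma_4(H)| \leq |\gamma_4(H_0)|$ still bounded.

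The hard part will be bounding $|N|$ in the first step. The difficulty is that elements of $S$ themselves need not lie in the FC-centre of $G$, so finiteness of $N$ must be extracted by carefully combining the covering condition with the FC-structure of $B$ and the finiteness of $S$; in particular, controlling $N \cap \langle B \rangle$ is delicate, since the FC-group $\langle B \rangle$ can itself be large and its commutator subgroup is torsion but not obviously bounded.
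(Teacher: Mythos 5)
Your proposal has two independent fatal gaps.

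First, the central claim that $N=\langle S\rangle^G$ has $(n,|S|)$-bounded order is false. Here is a counterexample. Fix a prime $p>3$ and $m\geq 1$, and let $L=L_1\oplus L_2\oplus L_3$ be the graded Lie algebra over $\F_p$ with basis $X,Y,Z_1,\dots,Z_m$ of $L_1$; basis $[X,Y],[X,Z_1],\dots,[X,Z_m]$ of $L_2$; basis $C_1,\dots,C_m$ of $L_3$; and nonzero structure constants generated by $[X,Y],\ [X,Z_i],\ [[X,Y],Z_i]=[[X,Z_i],Y]=C_i$ (all other basic brackets zero; the Jacobi identity is easily checked). Let $G$ be the corresponding class-$3$ group of exponent $p$ via the Lazard correspondence. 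Every commutator $[u,v]$ satisfies $[u,v]\equiv k[X,Y]\pmod{\langle[X,Z_i]\rangle + L_3}$ for some $k\in\F_p$. Commutators with $k=0$ have at most $p$ conjugates, while any element congruent to $k[X,Y]$ with $k\neq 0$ modulo $L_3$ has exactly $p^m$ conjugates. Taking $S=\{k[X,Y]:0\leq k\leq p-1\}$, every commutator lies in $Bs$ for some $s\in S$ with $n=p$, so the covering condition holds with $n$ and $|S|$ bounded independently of $m$. But $\langle S\rangle^G$ contains the whole conjugacy class of $[X,Y]$ and hence contains $L_3$, giving $|N|\geq p^{m}$, unbounded. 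Note that the conclusion of \Cref{thm:covered-groups} holds trivially here ($G$ itself has class $3$), so the theorem is fine; it is your reduction that breaks. The hedge in your last paragraph --- that controlling $N\cap\langle B\rangle$ is ``delicate'' --- is thus not a technicality to be cleaned up later: that intersection is genuinely unbounded, and this reduction strategy cannot work.

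Second, even if $|N|$ were bounded, citing~\cite{dieshu} in the quotient would not finish the proof. The result of~\cite{dieshu} for groups whose commutators have boundedly many conjugates ($S=\{1\}$) is that $G$ is bounded-by-\emph{metabelian}; it does not give a bounded-index subgroup of class at most $4$ with bounded $\gamma_4$. That stronger statement is precisely (the $S=\{1\}$ case of) \Cref{thm:covered-groups} and is one of the new contributions of the paper, obtained only after the full chain of arguments (the metric Neumann theorem, the induction on $|S|$ to bounded derived length, the Engel-condition argument to bounded class, and finally the multilinear-bias/rank-refinement step to pass from bounded class to class $4$). The example in Section~2 of the paper shows that even with $S=\{1\}$ a group satisfying the covering condition need not be virtually class-$3$, so you cannot expect any cheap argument to replace that final step. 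Your observation that passing to $G/\langle B\rangle$ forces $|\tilde G'|$ to be $|S|$-bounded is correct and potentially useful, but it does not combine with~\cite{dieshu} to give the theorem.
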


We do not know whether the techniques used in this paper can be adapted to handle groups $G$ with $d_k(G)$ bounded away from zero for $k \geq 3$ (that is, probabilistically nilpotent groups of class $k$),
or even more specially groups $G$ in which all weight-$k$ commutators $[x_1, \dots, x_k]$ have boundedly many conjugates.
It may be that any such group has a subgroup $H$ for which $[G:H]$ and $|\gamma_{k+1}(H)|$ are bounded.
However, where $X_k = \{[x_1, \dots, x_k] : x_1, \dots, x_k \in G\}$ is the set of weight-$k$ commutators, it is known that
\begin{itemize}
    \item if $|x^G| \leq n$ for all $x \in X_k$ then $|\gamma_k(G)'|$ is finite and $(k, n)$-bounded~\cite{detmorshu};
    \item if $|g^{X_k}| \leq n$ for all $g \in G$ then $|\gamma_{k+1}(G)|$ is finite and $(k, n)$-bounded~\cite{detdonmorshu}.
\end{itemize}

\section{Example}

In this section we construct a family of finite groups $G$ for which $d_2(G)$ is bounded away from zero but such that the largest class-3 subgroup of $G$ does not have bounded index.
A slight variant also gives an infinite group $G$ in which commutators have boundedly many conjugates but such that $G$ is not virtually class-3.
In both cases $G$ is nilpotent of class 4 and bounded-by-class-3.

Let $K = \F_p$ where $p$ is a small prime (say 2, 3, or 5)
and let $V$ be a vector space over $K$.
Let $f : V \times V \to K$ be a (generic) bilinear map.
We define a graded unital $K$-algebra
\[
    R = R_0 \oplus R_1 \oplus R_2 \oplus R_3 \oplus R_4
\]
with
\begin{align*}
  R_0 &= K,\\
  R_1 &= V,\\
  R_2 &= V \otimes V,\\
  R_3 &= V,\\
  R_4 &= K,
\end{align*}
and $R_k = 0$ for $k > 4$.
We must define the multiplication maps $R_i \times R_j \to R_{i+j}$ for all $i, j \geq 0$.
The unit 1 of $R_0 = K$ is defined to be a unit of $R$.
Multiplication $R_1 \times R_1 \to R_2$ is defined to be the universal map $V \times V \to V \otimes V$.
Let $\theta : R_1 \to R_3$ be an isomorphism.
We define multiplication between $R_1$ and $R_2$ and between $R_1$ and $R_3$ by the rules
\begin{align*}
    xyz &= f(y, z) x^\theta + f(x, y) z^\theta && (x, y, z \in R_1) \\
    x^\theta y &= x y^\theta = f(x, y) && (x, y \in R_1).
\end{align*}
Multiplication $R_2 \times R_2 \to R_4$ is then determined by $(xy)(zw) = x(yzw)$.

Associativity holds on $R_1 \times R_1 \times R_1$ by definition,
so it suffices to check associativity on $R_2 \times R_1 \times R_1$, $R_1 \times R_2 \times R_1$, and $R_1 \times R_1 \times R_2$.
Since $(xy)(zw) = x(yzw)$ for all $x, y, z, w \in R_1$, it suffices to verify that $(xyz)w = x(yzw)$ for $x, y, z, w \in R_1$, and this is a straightforward check:
\begin{align*}
    (xyz)w = f(y, z) (x^\theta w) + f(x, y) (z^\theta w) = f(y, z) f(x, w) + f(x, y) f(z, w), \\
    x(yzw) = f(z, w) (x y^\theta) + f(y, z) (x w^\theta) = f(z, w) f(x, y) + f(y, z) f(x, w).
\end{align*}

Let $f^S(x, y) = f(x, y) + f(y, x)$ and $f^A(x, y) = f(x, y) - f(y, x)$.
Write $[x, y]_L = xy - yx$ for the Lie bracket ($x, y \in R$).
Then we compute (for $x, y, z, w \in R_1$)
\begin{align*}
    [x, y]_L z &= f(y, z) x^\theta - f(x, z) y^\theta + f^A(x, y) z^\theta \\
    z [x, y]_L &= f^A(x, y) z^\theta + f(z, x) y^\theta - f(z, y) x^\theta \\
    [x, y, z]_L &= f^S(y, z) x^\theta - f^S(x, z) y^\theta \\
    [x, y, z, w]_L &= f^A(x, w) f^S(y, z) - f^A(y, w) f^S(x, z).
\end{align*}

Assume $f^S$ and $f^A$ are both nondegenerate.
Let $H \leq R_1$ be a subspace of finite codimension $d$.
If $2d < \dim V$ (or $V$ is infinite-dimensional), we can find $x, w \in H$ such that $f^A(x, w) \neq 0$.
Let $H_1 = H \cap \ker f^S(x, \cdot)$.
Then $H_1$ has codimension at most $d+1$, and if $d + (d+1) < \dim V$ then we can find $y \in H$ and $z \in H_1$
such that $f^S(y, z) \neq 0$. Since $f^S(x, z) = 0$, it follows that
\[
    [x, y, z, w]_L = f^A(x, w) f^S(y, z) \neq 0.
\]
Hence if $[x, y, z, w]_L = 0$ identically on $H$ then $H$ has codimension at least $(\dim V - 1) / 2$.

Let $L_i = \bigoplus_{k \geq i} R_k$ and let $G = 1 + L_1$.
Then $G$ is a nilpotent group of class 4.
The group commutator and Lie bracket are related by
\[
  [1 + x, 1 + y] = 1 + (1+x)^{-1} (1 + y)^{-1} [x, y]_L.
\]
It follows that
\[
    [1 + x, 1 + y, 1 + z] = 1 + [x, y, z]_L \pmod {L_4}.
\]
For any $x, y \in L_1$ the formula for $[x, y, z]_L$ shows that the set of $z \in L_1$ for which $[x, y, z]_L = 0$
is a subspace of codimension at most 2.
Hence the group of $1 + z \in G$ for which $[1+x, 1+y, 1+z] \in 1 + L_4$ has index at most $p^2$.
Since $|1+L_4| = |K| = p$, it follows that $[1+x, 1+y]$ has at most $p^3$ conjugates.
Hence every commutator has boundedly many conjugates.
On the other hand, the quadruple commutator is
\[
    [1+x, 1+y, 1+z, 1+w] = 1 + [x, y, z, w]_L.
\]
By our earlier remarks about $[x, y, z, w]_L$, if $H \leq G$ has class 3 then $[G:H]$ is at least $p^{(\dim V - 1)/2}$.

To give a concrete example let $V$ be the $\F_2$-vector space with basis $(e_i, e'_i : i \in I)$ (for some index set $I$) and let
\begin{align*}
    &f(e_i, e_j) = f(e'_i, e_j) = f(e'_i, e'_j) = 0\\
    &f(e_i, e'_j) = \delta_{ij}.
\end{align*}
Then $f^S = f^A$ is the bilinear map with matrix
\[
    \begin{pmatrix}
        0 & I \\
        I & 0
    \end{pmatrix},
\]
which is clearly nondegenerate.
By taking $I = \{1, \dots, n\}$ we get a family of finite groups with $d_2(G) \geq 1/2^3$ and no bounded-index class-3 subgroup.
By taking $I = \{1, 2, 3, \dots\}$ we get an infinite group $G$ in which every commutator has at most $2^3$ conjugates and yet $G$ has no finite-index class-3 subgroup.

\begin{remark}
    A similar but weaker example (class-3, but not bounded-by-class-2-by-bounded) appeared in \cite{eberhard-thesis}*{Section~2.7}.
\end{remark}

\section{Outline}

In this section we give an outline of the proof and in the next we list some tools that we need.
The proof of the main theorems then occupies the rest of the paper and consists of the following steps.
\begin{enumerate}
    \item In \Cref{sec:metric-neumann} we abstract the method in the proof of Peter Neumann's theorem (\Cref{thm:neumann})
    and we obtain a general statement about groups with an invariant seminorm.
    Applied to the seminorm $\|x\| = \log|x^G|$, the result is that if $d_2(G)$ is bounded away from zero then
    $G$ has a subgroup of bounded index
    satisfying the covering condition \eqref{eq:covering-condition} with $n$ and $|S|$ bounded in terms of $d_2(G)$.
    Thus \Cref{thm:d2-theorem} reduces to \Cref{thm:covered-groups}.
    \item In \Cref{sec:to-bounded-derived-length} we show that a group satisfying the covering condition is virtually soluble of bounded derived length.
    In the special case $S = \{1\}$ (groups in which commutators have boundedly many conjugates),
    it was shown in \cite{dieshu} that $G$ is bounded-by-metabelian.
    For arbitrary $S$ we use induction on $|S|$ and adapt the method of \cite{dieshu}.
    \item In \Cref{sec:to-bounded-class} we show that a soluble group of bounded derived length satisfying the covering condition
    has a nilpotent subgroup of bounded index and class.
    A key tool in this part of the proof is an asymmetric version of Neumann's theorem about $d_1(G)$.
    We also use Hall's criterion for nilpotency, which enables us to reduce to the metabelian case.
    In the metabelian case we argue that all elements in an appropriate subgroup satisfy the Engel identity,
    and it follows that the subgroup is nilpotent of bounded class.
    \item In \Cref{sec:to-class-4} we complete the proof by showing that a bounded-class nilpotent group satisfying the covering condition
    is bounded-by-class-3-by-bounded.
    To do this, we use induction on nilpotency class to reduce to the class-4 case.
    In the class-4 case, we use the theory of \emph{multilinear bias} (or \emph{analytic rank}).
    The quadruple commutator $[x, y, z, w]$ is a uniformly biased 4-linear map,
    and a version of the Jacobi identity harshly restricts the form of this map,
    and we can conclude that $G$ has a subgroup $H$ of bounded index with $|\gamma_4(H)|$ bounded,
    as desired.
\end{enumerate}

\section{Tools}

\subsection{The Neumanns' theorems}

Key tools and prototypes for our main theorems are a pair of results by a pair of Neumanns.
The first is a result due to Bernhard Neumann describing the structure of a \emph{BFC group}, a group in which $|x^G| \leq n$ for all $x \in G$.
The conclusion is that $|G'|$ is finite,
and conversely it is clear that if $G'$ is finite then $|x^G|$ is finite and $|G'|$-bounded for every $x \in G$.

\begin{theorem}
    [B.~Neumann's BFC theorem~\cite{bneumann}; see also \cite{robinson}*{14.5.11}]
    \label{thm:BFC}
    Let $G$ be a group in which $|x^G| \leq n$ for every $x \in G$.
    Then $|G'|$ is finite and $n$-bounded.
\end{theorem}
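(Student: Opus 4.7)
\medskip

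The plan is to derive the theorem from Dicman's lemma, which states that if $X \subseteq G$ is a finite normal subset consisting of elements of finite order, then $\langle X \rangle$ is finite, with order bounded in terms of $|X|$ and $\max_{x \in X}|x|$. Since the commutator set $K = \{[x, y] : x, y \in G\}$ is a $G$-normal subset generating $G'$, it suffices to bound (i) the order of each commutator by some $e(n)$, and (ii) $|K|$ itself by some $f(n)$.

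For (i), given $x, y \in G$, I would pass to the $2$-generator subgroup $H = \langle x, y \rangle$. Then $H$ inherits the BFC hypothesis with the same bound, and moreover $[H : Z(H)] \leq [H : H \cap C_G(x) \cap C_G(y)] \leq n^2$, since any element of $H$ centralising both generators is central in $H$. Schur's theorem---a group whose centre has finite index has finite derived subgroup, of order bounded in terms of that index---then gives $|H'| \leq e(n)$, so in particular $|[x, y]| \leq e(n)$.

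For (ii), the easy observation is that for each fixed $x$ the map $y \mapsto [x, y] = x^{-1} x^y$ takes values in $x^{-1} \cdot x^G$, a set of size at most $n$, so there are at most $n$ distinct commutators with any given left entry; dually for the right entry. To upgrade these per-slot bounds to a global bound on $|K|$, I would exploit the fact that every commutator itself has at most $n$ conjugates in $G$ and hence lies in the FC-centre of $G$. Working inside the FC-centre, one iteratively selects commutators $c_1, c_2, \dots$, passing to the centraliser chain $C_G(c_1) \supseteq C_G(c_1) \cap C_G(c_2) \supseteq \cdots$, whose indices in $G$ are capped at each stage by the per-slot bound; a counting argument in the style of Neumann's original proof then shows the chain terminates after $f(n)$ steps and exhausts all commutators.

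Combining (i) and (ii) with Dicman's lemma produces $|G'| \leq f'(n)$, as desired. The main obstacle is step (ii): the per-slot bound $|\{[x, g] : g \in G\}| \leq n$ is immediate, but coordinating the left- and right-hand slots to obtain a uniform bound on the total number of distinct commutators as $(x, y)$ ranges over $G \times G$ is the crux of the proof.
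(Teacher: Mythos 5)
The paper does not actually prove this theorem---it is cited to B.\ Neumann and to Robinson's textbook---so there is no internal proof to compare against. Your strategy (bound the order of each commutator, bound the number of distinct commutators, then apply Dicman's lemma to the normal set of commutators) is a recognizable classical route, closer in spirit to Wiegold's and Macdonald's treatments than to Neumann's original coset-covering argument, and step (i) is fully correct: for $H = \langle x, y\rangle$ one has $[H : Z(H)] \leq n^2$ since $H \cap C_G(x) \cap C_G(y) \leq Z(H)$, so the quantitative Schur theorem bounds the order of $[x,y]$.

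Step (ii), which you rightly flag as the crux, has a genuine gap. The centraliser chain $C_G(c_1) \supseteq C_G(c_1) \cap C_G(c_2) \supseteq \cdots$ drops by index at most $n$ at each step, but the cumulative index $[G : C_G(c_1) \cap \cdots \cap C_G(c_k)]$ grows like $n^k$, so nothing in the argument as written forces the chain to stabilise after a bounded number of steps. Worse, even if it does stabilise after some bounded $k$, so that $D = \bigcap_{i \leq k} C_G(c_i)$ centralises every commutator, that gives only $K \subseteq C_G(D)$ with $[G:D]$ bounded---it does not cap $|K|$, since $C_G(D)$ can be arbitrarily large. The proofs that succeed along these lines select the $c_i$ with care (for instance, taking an element of maximal class size, restricting to its centraliser, and comparing the conjugacy class of the accumulated product $c_1\cdots c_k$ against the global maximum $n$ to force termination); that comparison, or an equivalent device, is the missing ingredient. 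Until it is supplied, steps (i) and (ii) together with Dicman's lemma do not yet constitute a proof.
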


An explicit bound for $|G'|$ was first established by Wiegold~\cite{wiegold}.
Subsequently several people worked on the bound for $|G'|$.
Guralnick and Mar\'oti proved in \cite{guralnick-maroti}*{Theorem~1.9} that $|G'| \leq n^{(7 + \log_2 n)/2}$.

The second is the theorem of Peter Neumann about $d_1$ already mentioned in the introduction.

\begin{theorem}
    [P.~Neumann~\cite{pneumann}]
    \label{thm:neumann}
    Let $G$ be a finite group such that $d_1(G) \geq \eps > 0$.
    Then $G$ has a normal subgroup $H$ such that $[G:H]$ and $|H'|$ are $\eps$-bounded.
\end{theorem}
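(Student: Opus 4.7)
The plan is to reduce P.~Neumann's theorem to B.~Neumann's BFC theorem (\Cref{thm:BFC}): it suffices to exhibit a normal subgroup $H \nsgp G$ of $\eps$-bounded index such that $|h^H|$ is uniformly $\eps$-bounded for $h \in H$, whence \Cref{thm:BFC} gives $|H'|$ $\eps$-bounded.

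A Markov-type averaging on the identity $d_1(G) = \E_{x \in G}[1/|x^G|]$ (obtained by counting commuting pairs via centralizers) shows that $A := \{x \in G : |x^G| \leq 2/\eps\}$ has density $|A|/|G| \geq \eps/2$. Since $A$ is a union of conjugacy classes, it generates a normal subgroup $H_0 := \langle A \rangle \nsgp G$ with $[G:H_0] \leq 2/\eps$. To refine $H_0$ to a bounded-index subgroup with bounded commutator, I would invoke Schur's quantitative theorem via the intermediate $N := \bigcap_{i=1}^t \operatorname{Core}_G(C_G(a_i))$ for a carefully selected finite set $\{a_1, \dots, a_t\} \subseteq A$: each factor is the kernel of the $G$-action on $a_i^G$ and so has index at most $(2/\eps)!$ in $G$, and $N$ centralizes the normal closure $M := \langle a_1^G, \dots, a_t^G\rangle \nsgp G$, so $N \cap M \leq Z(M)$ and $[M : Z(M)] \leq [MN : N] \leq [G:N]$ is $\eps$-bounded provided $t$ is. Schur's theorem then gives $|M'|$ $\eps$-bounded; provided $M$ also has $\eps$-bounded index in $H_0$, we take the desired subgroup to be (the normal core in $G$ of) $M$ and conclude.

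The main obstacle is securing the selection of $a_1, \dots, a_t$ with $t$ itself $\eps$-bounded. A direct greedy procedure -- adding $a_i \in A$ outside the current normal closure $M_{i-1}$ until $[H_0 : M_t] \leq 2/\eps$ -- at least doubles $|M_i|$ at each step and thus terminates only after up to $\log_2 |H_0|$ steps, which is not $\eps$-bounded. Moreover $A$ can contain many distinct $G$-orbits (as many as $\Omega(|G|)$), precluding a naive orbit-by-orbit exhaustion. The fix should be a finer pigeonhole/averaging argument exploiting both the $\eps/2$-density of $A$ and its $G$-invariance, in the spirit of the metric abstraction of \Cref{sec:metric-neumann}, which I expect contains the real technical heart of the proof.
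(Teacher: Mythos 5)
Your first two steps are exactly right and match the paper's argument: Markov's inequality applied to $d_1(G) = \E_{x \in G}\bigl[ |C_G(x)|/|G| \bigr]$ gives that $A = \{x \in G : |x^G| \leq 2/\eps\}$ has density at least $\eps/2$, and since $A$ is normal, symmetric, and contains $1$, the subgroup $H_0 = \langle A \rangle$ is normal of index at most $2/\eps$. But you then go off track by trying to locate a \emph{bounded generating set} $a_1, \dots, a_t$ of small conjugacy classes, and you are right that this cannot be done: $H_0$ may need unboundedly many $G$-orbits from $A$ to generate it. That is not the right target, and no amount of pigeonholing will save it.

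The missing tool is the bounded-generation lemma, stated in the paper as \Cref{lem:bounded-gen}: for a finite group $\Gamma$ and a symmetric subset $X \ni 1$, one has $\langle X\rangle = X^r$ with $r = 3\floor{|\Gamma|/|X|}$. Applied with $\Gamma = H_0$ (or $G$) and $X = A$, whose density is at least $\eps/2$, this gives an $\eps$-bounded $r$ such that every $h \in H_0$ can be written $h = a_1 \cdots a_r$ with $a_i \in A$. The point is that you do not need boundedly many \emph{orbits}; you need bounded \emph{word length} in $A$, which is a strictly weaker and in fact achievable requirement. From $h = a_1 \cdots a_r$ it is immediate that
\[
    |h^G| \leq \prod_{i=1}^{r} |a_i^G| \leq (2/\eps)^r,
\]
which is $\eps$-bounded. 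Hence $H_0$ itself is a BFC group with $\eps$-bounded conjugacy-class sizes, and B.~Neumann's theorem (\Cref{thm:BFC}) gives $|H_0'|$ $\eps$-bounded. You may simply take $H = H_0$; the detour through normal closures $M$, cores of centralizers $N$, and Schur's theorem is unnecessary. The paper's actual proof (via \Cref{thm:metric-neumann} with the discrete norm) does essentially this, just abstracted to arbitrary invariant seminorms and to the asymmetric two-subgroup setting, with \Cref{lem:bounded-gen} playing precisely the role you were missing.
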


This theorem bears roughly the same relation to \Cref{thm:BFC} as \Cref{thm:d2-theorem} bears to \Cref{thm:covered-groups}.

Actually we will use an asymmetric version of Peter Neumann's theorem that considers not just $d_1(G)$
but the commuting probability between two normal subgroups $A, B \nsgp G$.

\begin{theorem}
    [P.~Neumann's theorem, asymmetric version]
    \label{thm:asymmetric-neumann}
    Let $G$ be a finite group with normal subgroups $A, B \nsgp G$
    such that $\P_{a \in A, b \in B}([a, b] = 1) \geq 1/C$.
    Then there are $C$-bounded-index subgroups $H \leq A$ and $K \leq B$,
    both normal in $G$,
    such that $|[H, K]|$ is $C$-bounded.
\end{theorem}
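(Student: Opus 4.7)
The plan follows the strategy of P.~Neumann's \Cref{thm:neumann}, adapted to the asymmetric two-variable setting.

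First, by Markov's inequality applied to the identity
\[
  \P_{a \in A, b \in B}([a,b]=1) = \E_{a \in A}\bigl[|a^B|^{-1}\bigr] \geq 1/C,
\]
the set $A' := \{a \in A : |a^B| \leq 2C\}$ has density at least $1/(2C)$ in $A$, and symmetrically $B' := \{b \in B : |b^A| \leq 2C\}$ has density at least $1/(2C)$ in $B$.

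Next, since $A$ and $B$ are normal in $G$, the functions $a \mapsto |a^B|$ and $b \mapsto |b^A|$ are constant on $G$-conjugacy classes, so $A'$ and $B'$ are unions of $G$-conjugacy classes. Define $H := \langle A' \rangle \leq A$ and $K := \langle B' \rangle \leq B$; both are normal in $G$. Since $A' \subseteq H$ has density $\geq 1/(2C)$ in $A$, we have $[A:H] \leq 2C$, and similarly $[B:K] \leq 2C$.

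Finally, we bound $|[H, K]|$. Using commutator identities such as $[h_1 h_2, k] = [h_1, k]^{h_2}[h_2, k]$ and its analog in the second variable, $[H, K]$ is the subgroup generated by the $G$-invariant set $D := \{[a,b] : a \in A', b \in B'\}$. For each fixed $a \in A'$, the row $\{[a, b] : b \in B\}$ has size at most $|a^B| \leq 2C$, because $[a, b] = a^{-1} a^b$ and $a^b$ ranges over a subset of $a^B$; symmetrically, each column $\{[a, b] : a \in A\}$ has at most $2C$ entries. Combining this bilinear BFC-type constraint with an appropriate adaptation of B.~Neumann's BFC theorem \Cref{thm:BFC}---for instance by working in a finite-index subgroup on which a bounded set of generators of $H$ and $K$ captures the commutator structure---yields $|[H, K]|$ finite and $C$-bounded.

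The main obstacle is this last step. The naive BFC theorem does not apply directly: generators of $H$ have bounded $K$-conjugacy classes (not bounded $G$-conjugacy classes), and the combinatorial passage from bounded rows and columns of $D$ to a bound on $|\langle D \rangle|$ requires an argument in the spirit of Neumann's original proof of \Cref{thm:BFC}, suitably adapted to the asymmetric commutator setting between two normal subgroups.
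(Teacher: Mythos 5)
Your setup closely parallels the paper's: you identify $A' = \{a \in A : |a^B| \leq 2C\}$ and $B' = \{b \in B : |b^A| \leq 2C\}$ as $G$-invariant positive-density subsets, form $H = \langle A'\rangle$ and $K = \langle B'\rangle$, and correctly establish that these are normal and of bounded index. The problem is exactly where you flag it, and the gap is genuine, not merely a matter of ``adapting'' the BFC theorem. From bounded rows and columns of $D = \Comm(A', B')$ alone you cannot bound $|D|$, let alone $|\langle D\rangle|$: an $|A'| \times |B'|$ array in which every row and every column takes at most $2C$ distinct values can still have an unbounded number of distinct entries in total. You need a further argument that exploits the group structure, and you have not supplied it.

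The paper deduces the result from the more general \Cref{thm:metric-neumann} (applied with the discrete norm), and the two missing ingredients are visible in that proof. First, one transfers the bounded-conjugacy-class property from generators to the whole subgroup: by \Cref{lem:bounded-gen}, every $h \in H$ is a product of boundedly many elements of $A'$, whence $|h^K|$ is $C$-bounded for \emph{every} $h \in H$, and symmetrically $|k^H|$ is $C$-bounded for every $k \in K$; you implicitly have the ingredients for this but do not carry it out. Second --- and this is the crucial combinatorial step --- one runs a pigeonhole/double-counting argument: for any fixed $(h,k) \in H \times K$, perturbing $h$ by an element of $C_H(k')$ and $k$ by an element of $C_K(h)$ leaves $[h,k]$ unchanged, and since these centralizers have bounded index, a fixed positive proportion of pairs $(h',k')$ satisfy $[h',k'] = [h,k]$. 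This forces $|\Comm(H,K)|$ itself to be $C$-bounded, which is strictly stronger than bounded rows and columns. Finally, passing from the bounded \emph{set} $\Comm(H,K)$ to a bounded \emph{subgroup} $[H,K]$ is a separate step: the paper reduces to boundedly generated $H_0 \leq H$ and $K_0 \leq K$ capturing $[H,K]$, observes that the centralizers $C_{H_0}(K_0)$ and $C_{K_0}(H_0)$ then have bounded index, and invokes Baer's asymmetric version of Schur's theorem. You would need to supply both the separation argument and this last Schur-type step to complete the proof; as it stands, the proposal stops at the point where the real work begins.
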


To our knowledge this result does not appear in the literature, though it is similar to (and easier than) \cite{DS}*{Proposition~1.2}.
It can be established by adapting the proof in the symmetric case appropriately.
This result is also a corollary of a more general result we will prove, so a proof will be given in \Cref{sec:metric-neumann}.

\subsection{Tools for reducing to the finite case}

In \Cref{thm:covered-groups} we allow $G$ to be infinite,
mostly because we can reduce to the finite case whenever $G$ is locally residually finite.

Say that $G$ is \emph{$m$-by-class-$s$-by-$n$}
if $G$ has a subgroup $H$ such that $[G : H] \leq n$ and $|\gamma_s(H)| \leq m$.

\begin{lemma}
    \label{lem:reduction-to-finite}
    Let $G$ be a group.
    \begin{enumerate}[(1)]
        \item $G$ is $m$-by-class-$s$-by-$n$ if and only if every finitely generated subgroup of $G$ is so.
        \item If $G$ is residually finite, $G$ is $m$-by-class-$s$-by-$n$ if and only if every finite quotient of $G$ is so.
    \end{enumerate}
\end{lemma}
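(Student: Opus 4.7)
The plan is to treat both parts with a single compactness theme, handling the easy ``only if'' directions first.

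\medskip

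For part (1), the only-if direction is immediate: if $G$ has $H$ with $[G:H]\le n$ and $|\gamma_s(H)|\le m$, then for any finitely generated $K\le G$ the subgroup $K\cap H$ has $[K:K\cap H]\le n$ and $|\gamma_s(K\cap H)|\le|\gamma_s(H)|\le m$. For the converse, I would work in the compact product space $\{0,1\}^G$, identifying subsets with their indicator functions. The conditions ``$H$ is a subgroup'', ``$[G:H]\le n$'', and ``$|\gamma_s(H)|\le m$'' are each closed: the first is a pointwise equational condition; the second fails only when one can exhibit $n+1$ distinct cosets, a finite-set witness; and the third likewise fails only via a finite witness of $m+1$ distinct $s$-fold commutators in $H$. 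For each finitely generated $K\le G$, let $Z_K\subset\{0,1\}^G$ be the set of subsets $H$ such that $H\cap K$ is a subgroup of $K$ with $[K:H\cap K]\le n$ and $|\gamma_s(H\cap K)|\le m$; this is closed, and the hypothesis plus the inclusion $K\le K'$ giving $[K:K\cap H]\le[K':K'\cap H]$ show that the family $\{Z_K\}$ has the finite intersection property. Compactness then provides some $H\in\bigcap_K Z_K$; by checking $H$ against finite generating sets one verifies $H$ is a subgroup of index $\le n$ with $|\gamma_s(H)|\le m$.

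\medskip

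For part (2), the only-if direction is trivial (push $H$ forward by any surjection onto a finite quotient). For the converse, I would first reduce to the case $G$ finitely generated using part (1), since any finitely generated subgroup of a residually finite group is residually finite and its finite quotients are quotients of finite quotients of $G$. Now assume $G$ is finitely generated and residually finite, with a chain of normal subgroups $N_1\supseteq N_2\supseteq\cdots$ with $\bigcap N_i=1$ and each $G/N_i$ finite. By hypothesis each $G/N_i$ has a subgroup of index $\le n$ whose $\gamma_s$ has size $\le m$; pulling back gives $H_i\le G$ with $N_i\le H_i$, $[G:H_i]\le n$, and $|\gamma_s(H_i)N_i/N_i|\le m$. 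Since $G$ is finitely generated, it has only finitely many subgroups of index $\le n$, so some single $H$ equals $H_{i_j}$ for an infinite subsequence $i_1<i_2<\cdots$.

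\medskip

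It remains to show $|\gamma_s(H)|\le m$. For each $j$ the bound $|\gamma_s(H)/(\gamma_s(H)\cap N_{i_j})|\le m$ holds; if $\gamma_s(H)$ contained distinct elements $h_1,\dots,h_{m+1}$, then residual finiteness (applied to each nontrivial $h_kh_\ell^{-1}$) would make them pairwise distinct modulo $N_{i_j}$ for $j$ sufficiently large, contradicting the bound. The main technical point across both parts is the verification that ``$|\gamma_s(H)|\le m$'' is preserved under the relevant limits, but this is straightforward because any violation of the bound is witnessed by a finite configuration. No step should present a serious obstacle.
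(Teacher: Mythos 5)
Your part (1) is essentially the paper's argument: both are compactness arguments, yours phrased in $\{0,1\}^G$, the paper's in a product over finitely generated subgroups. That part is fine.

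For part (2) you take a genuinely different route. The paper repeats the compactness argument over the directed system of finite quotients to get a \emph{consistent} family of subgroups $H_Q \leq Q$, and then uses the maximality of $[Q:H_Q]$ over all finite quotients $Q$ to promote a single $H_Q$ to a global subgroup $H \leq G$ with $\bar H = H_{\bar G}$ for \emph{every} finite quotient $\bar G$. Your route instead reduces to the finitely generated case via part (1) and then uses a descending chain plus the pigeonhole principle (a finitely generated group has only finitely many subgroups of index $\leq n$), which is arguably more elementary than the consistency-plus-maximality bookkeeping. But the reduction step contains a real gap. You justify it by asserting that, for $K \leq G$ finitely generated, ``its finite quotients are quotients of finite quotients of $G$.'' That is false in general: to realize an arbitrary finite quotient $K/M$ as a quotient of $\pi(K)$ for some finite quotient $\pi$ of $G$ one needs $K \cap \ker\pi \leq M$, which is a subgroup-separability (LERF-type) condition on $K$ in $G$ that can fail. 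A concrete example: in $G = BS(1,2) = \langle a, t : tat^{-1}=a^2\rangle$, which is finitely generated and residually finite, take $K = \langle a \rangle \cong \Z$. In every finite quotient of $G$ the image of $a$ has odd order (since $\bar a = \bar a^{\,2^{|\bar t|}}$), so $K/(K \cap N)$ is always of odd order and $K/2K \cong \Z/2$ never arises this way. So not every finite quotient of $K$ is known to be $m$-by-class-$s$-by-$n$ under your hypothesis.

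The gap is easy to close, because your finitely generated argument does not actually need \emph{every} finite quotient of $K$ to have the property --- only the quotients $K/(K\cap N_i)$ along a descending chain $N_1\supseteq N_2\supseteq\cdots$ of finite-index normal subgroups of $G$ with $\bigcap N_i = 1$. Each $K/(K\cap N_i)$ embeds into $G/N_i$, and the property $m$-by-class-$s$-by-$n$ passes to subgroups (the easy direction of part (1)), so each $K/(K\cap N_i)$ has it. Replacing the false justification with this observation makes your proof of part (2) correct, and the resulting argument is a clean alternative to the paper's.
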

\begin{proof}
    (1) Suppose $[G:H] \leq n$ and $|\gamma_s(H)| \leq m$.
    Let $\Gamma \leq G$ be a subgroup.
    Then clearly $[\Gamma : H \cap \Gamma] \leq n$ and $|\gamma_s(H \cap \Gamma)| \leq m$,
    so the forward implication is clear.
    Now suppose every finitely generated subgroup $\Gamma \leq G$ is $m$-by-class-$s$-by-$n$.
    Let $I$ be the set of finitely generated subgroups of $G$.
    Since any $d$-generated group has at most $n!^d$ subgroups of index at most $n$,
    the product space
    \[
        X = \prod_{\Gamma \in I} \{H \leq \Gamma : [\Gamma:H] \leq n, |\gamma_s(H)| \leq m\}
    \]
    is compact.
    For each $\Gamma \in I$ let $C_\Gamma$ be the set of all vectors $(H_\Delta : \Delta \in I) \in X$ such that
    $H_\Delta = H_\Gamma \cap \Delta$ for $\Delta \leq \Gamma$.
    Then the sets $C_\Gamma$ are closed, $C_{\Gamma_1} \cap \cdots \cap C_{\Gamma_k} \supset C_\Delta$ where $\Delta = \langle \Gamma_1, \dots, \Gamma_k\rangle$,
    and the hypothesis that every $\Gamma \in I$ is $m$-by-class-$s$-by-$n$ implies that $C_\Gamma \neq \emptyset$.
    Hence the sets $C_\Gamma$ have the finite intersection property, so by compactness $\bigcap_{\Gamma \in I} C_\Gamma \neq \emptyset$.
    If $(H_\Gamma) \in \bigcap_{\Gamma \in I} C_\Gamma$ then $H_\Gamma = H \cap \Gamma$ for all $\Gamma$, where $H = \langle H_\Gamma : \Gamma \in I\rangle$.
    Now $[\Gamma : H \cap \Gamma] \leq n$ for every $\Gamma \in I$, so $[G : H] \leq n$,
    and $|\gamma_s(H \cap \Gamma)| \leq m$ for every $\Gamma \in I$, so $|\gamma_s(H)| \leq m$.
    Hence $G$ is $m$-by-class-$s$-by-$n$, as claimed.
    
    (2) Suppose $[G:H] \leq n$ and $|\gamma_s(H)| \leq m$.
    Let $\bar G = G / N$ be a quotient of $G$.
    Then $[\bar G : \bar H] = [G : HN] \leq n$ and $|\gamma_s(\bar H)| \leq |\gamma_s(H)| \leq m$,
    so the forward implication is clear.
    Now suppose every finite quotient $Q = G / N$ is $m$-by-class-$s$-by-$n$.
    Let $I$ be the set of finite quotients of $G$.
    A compactness argument as in the previous part establishes that there are subgroups $H_Q \leq Q$ for each $Q \in I$
    obeying $[Q : H_Q] \leq n$ and $|\gamma_s(H_Q)| \leq m$ and
    which are consistent in the sense that if $R$ is (naturally) a quotient of $Q$ then $H_R$ is the quotient of $H_Q$.
    Since $[Q : H_Q] \leq n$ for all $Q$ there is some $Q$ such that $[Q : H_Q]$ is maximal.
    Let $H$ be the preimage of $H_Q$ in $G$ for this $Q$.
    Then $[G : H] = [Q : H_Q] \leq n$
    and $H_{\bar G} = \bar H$ for every quotient $\bar G \in I$,
    so $|\gamma_s(H)| = \max_{\bar G \in I} |\gamma_s(\bar H)| \leq m$.
    Hence $G$ is $m$-by-class-$s$-by-$n$, as claimed.
\end{proof}

We will also need to know that the covering condition \eqref{eq:covering-condition}
behaves well with respect to subgroups and quotients.

\begin{lemma}
    \label{lem:covering-subgroups-and-quotients}
    Suppose $G$ satisfies the covering condition \eqref{eq:covering-condition}.
    \begin{enumerate}[(1)]
        \item If $H \leq G$ then $H$ satisfies the covering condition with $(n^2, S')$ in place of $(n, S)$
        for some set $S'$ of size at most $|S|$.
        \item If $\bar G$ is a homomorphic image of $G$ then $\bar G$ satisfies the covering condition with $(n, \bar S)$
        in place of $(n, S)$.
    \end{enumerate}
\end{lemma}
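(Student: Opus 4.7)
The plan for (1) is to transport the decomposition $\Comm(G, G) \subset BS$ into $H$ at the cost of squaring the class-size bound while keeping the tail set no larger. First I would discard from $S$ every element $s$ which does not actually appear as $b^{-1}[h_1, h_2]$ for some $b \in B$ and $h_1, h_2 \in H$; for each surviving $s$ I would fix once and for all a witness pair $(k_1(s), k_2(s)) \in H \times H$ together with $b_0(s) \in B$ such that $[k_1(s), k_2(s)] = b_0(s) s$, and set $s' = [k_1(s), k_2(s)] \in H'$. The new tail set $S' = \{s' : s \text{ survives}\}$ then lies in $H'$ and has size at most $|S|$.

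Now given an arbitrary commutator $[h_1, h_2]$ with $h_i \in H$, the $G$-level covering condition gives $[h_1, h_2] = b_1 s$ for some $b_1 \in B$ and some surviving $s \in S$, and therefore
\[
    [h_1, h_2] = \br{b_1 b_0(s)^{-1}} s'.
\]
The factor $c = b_1 b_0(s)^{-1}$ lies in $H$ because both $[h_1, h_2]$ and $s'$ do. Its conjugacy class in $G$ has size at most $n^2$, since $C_G(b_1) \cap C_G(b_0(s)) \subset C_G(c)$ and the left-hand side has index at most $[G : C_G(b_1)] \cdot [G : C_G(b_0(s))] \leq n \cdot n$. Because the $H$-class of $c$ is no larger than its $G$-class, $c$ belongs to $\{x \in H : |x^H| \leq n^2\}$, and the covering condition for $H$ with data $(n^2, S')$ follows.

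For (2) the argument is essentially pushforward: writing $\pi : G \to \bar G$ for the quotient map and $[g_1, g_2] = bs$, we get $[\pi(g_1), \pi(g_2)] = \pi(b) \pi(s)$, where $\pi(b)$ has $\bar G$-class of size at most $n$ (being a quotient of $b^G$) and $\pi(s) \in \pi(S) = \bar S \subset \bar G'$. One may thus take $\bar S$ to be the literal image of $S$.

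The only real subtlety is in (1): one might naively hope that $B \cap H$ paired with $S \cap H'$ already suffices, but this fails because the $G$-level decomposition of a commutator from $H$ need not respect $H$. The fix --- picking one witness per $s$ and shifting by it --- is precisely what forces both the replacement of $S$ by $S'$ and the loss from $n$ to $n^2$ via the elementary intersection-of-centralizers estimate.
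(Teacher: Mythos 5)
Your argument is correct and follows the same route as the paper's proof: pick one representative per surviving $s\in S$ from inside $H$, shift an arbitrary commutator of $H$ by that representative, and observe that the shift lies in $B^2$ (hence in $\{x\in H : |x^H|\le n^2\}$ via the centralizer-intersection estimate). Your version is in fact slightly more careful than the paper's on one point: the paper selects $s'$ merely from $Bs\cap H$, which only guarantees $s'\in H$, whereas the covering condition demands the tail set lie in $H'$; by insisting that $s'=[k_1(s),k_2(s)]$ be an actual commutator of $H$ (equivalently, drawing the representative from $Bs\cap\Comm(H,H)$), you secure $S'\subset H'$ explicitly. This is a refinement of detail rather than a different proof, and part (2) is the same pushforward observation in both.
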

\begin{proof}
    (1) Let $S'$ be a set containing one point of $Bs \cap H$ whenever $Bs \cap H \neq \emptyset$.
    If $x \in \Comm(H, H) \subset \Comm(G, G)$ then $x \in Bs \cap H$, so $x, s' \in Bs$ for some $s' \in S'$.
    Write $x = b_1s$ and $s' = b_2s$.
    Then $x = b_1b_2^{-1}s' \in B^2 s$, so $\Comm(H, H) \subset B^2 S'$.
    
    (2) Clear.
\end{proof}

Somewhat related to this lemma, the beautiful result \cite{MTVV}*{Theorem~1.21} states that, for $G$ finite,
\begin{equation}
    \label{eq:dk-submult}
    d_k(G) \leq d_k(N) d_k(G/N)
\end{equation}
whenever $N \nsgp G$.
This shows that $d_k$ behaves very well with respect to normal subgroups and quotients.
However, we do not need a result of this form because we work mainly with the covering condition instead of $d_2$.

%

\subsection{Multilinear bias and a 4-linear rank-reduction lemma}

We need the following structure theorem for biased multilinear maps.
If $A_1, \dots, A_k$ are groups and $I \subset [k] = \{1, \dots, k\}$ we write $A_I = \prod_{i \in I} A_i$
and we write $x_I$ for the projection of $x \in A_{[k]}$ to $A_I$.
We write $I^c$ for the complement $[k] \setminus I$.
If $g$ is a (multilinear) function we write $\cod(g)$ for its codomain.

\begin{theorem}[\cite{bias-notes}*{Corollary~1.3}]
    \label{thm:bias-structure}
    Suppose $A_1, \dots, A_k, B$ are finite abelian groups and $F : A_{[k]} \to B$ is a multilinear map such that $\P(F = 0) \geq \eps > 0$.
    Then there is an expression
    \begin{equation}
        \label{eq:biased-structured}
        F(x) = \sum_{\emptyset \neq I \subset [k]} G_I(g_I(x_I), x_{I^c}),
    \end{equation}
    where for each $I$ the functions $g_I$ and $G_I$ are multilinear maps
    \begin{align*}
        &g_I : A_I \to \cod(g_I), \\
        &G_I : \cod(g_I) \times A_{I^c} \to B,
    \end{align*}
    and $|\cod(g_I)|$ is $(\eps, k)$-bounded.
\end{theorem}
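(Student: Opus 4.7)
The plan is to proceed by induction on $k$. The base case $k=1$ is immediate: a homomorphism $F \colon A_1 \to B$ with $\P(F = 0) \geq \eps$ has kernel of index at most $1/\eps$, so $|\opr{Im} F| \leq 1/\eps$, and we may take $g_{\{1\}} = F$ with codomain $\opr{Im} F$ and $G_{\{1\}}$ the inclusion into $B$.

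For the inductive step $k \geq 2$, let $M$ denote the abelian group of all $(k-1)$-linear maps $A_{[k-1]} \to B$, and consider the group homomorphism $\phi \colon A_k \to M$ defined by $\phi(a)(x_{[k-1]}) = F(x_{[k-1]}, a)$. Writing $I := \opr{Im} \phi$, the pushforward of the uniform measure on $A_k$ is uniform on $I$, so
\[
    \P(F = 0) = \E_{\psi \in I} \P_{x_{[k-1]}}(\psi(x_{[k-1]}) = 0) \geq \eps.
\]
If $|I|$ is $\eps$-bounded, then $F$ admits a single-term decomposition with index set $\{k\}$: take $g_{\{k\}} = \phi$ and $G_{\{k\}}(\psi, x_{[k-1]}) = \psi(x_{[k-1]})$, which is multilinear as required.

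In the opposite case $|I|$ is large, a positive-density subset of $\psi \in I$ are themselves $\eps/2$-biased $(k-1)$-linear maps, so by the inductive hypothesis each admits a decomposition of the required form on $A_{[k-1]}$. The main task is to arrange these decompositions to depend linearly on the parameter $a \in A_k$, so that they assemble into a genuinely multilinear decomposition of $F$. I expect this is best handled Fourier-analytically: the bias hypothesis reads $\sum_{\chi \in \hat B} \E_x \chi(F(x)) \geq \eps |B|$, and since the iterated additive derivatives of a multilinear form collapse via $\Delta_{h_1} \cdots \Delta_{h_k} F(x) = F(h_1, \ldots, h_k)$, a multilinear Gowers--Cauchy--Schwarz inequality allows one to extract bounded-codomain multilinear factors iteratively in each coordinate.

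The main obstacle is exactly this uniform multilinear extraction: each $g_I$ must itself be multilinear in $x_I$, and after peeling it off the remainder of $F$ must still be multilinear and biased so that the induction continues. I would attempt to peel off the $I = [k]$ term first, capturing the joint ``low-complexity'' Fourier structure of $F$ across all coordinates (effectively replacing $B$ by a bounded quotient on which $F$ has controlled image), and then recurse on individual coordinates to extract the partition-style terms with $I \subsetneq [k]$.
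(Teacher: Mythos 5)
First, a point of context: this theorem is imported by the paper from \cite{bias-notes}*{Corollary~1.3} and is not proved in the paper itself, so there is no in-paper argument to compare your proposal against; the evaluation below is purely on the merits of your sketch.

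Your base case and the framing of the inductive step are correct: $\phi \colon A_k \to M$, $\phi(a) = F(\cdot, a)$, is a homomorphism, and if $|\opr{Im}\phi|$ is $\eps$-bounded you do indeed get the single term with $I = \{k\}$. But the heart of the theorem is precisely the opposite case, and there your argument has a genuine gap. Knowing that a positive density of the slices $F_a$ are biased lets you apply the inductive hypothesis to each one individually, but the resulting decompositions of $F_a$ are highly non-unique, and nothing forces them to vary as group homomorphisms in $a$. There is no reason the bounded-codomain groups $\cod(g_I)$ appearing in the decomposition of $F_a$, or the maps $g_I, G_I$ themselves, can be chosen to depend linearly on $a$; and without that, the individual decompositions do not assemble into a multilinear decomposition of $F$. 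You acknowledge this as ``the main obstacle,'' and your suggested fix (Fourier analysis plus a ``multilinear Gowers--Cauchy--Schwarz inequality'') is only an aspiration, not an argument: you do not say what inequality you would use, what quantity it would control, or how it would produce the required multilinear factors $g_I$ with bounded codomain. The proposal to ``peel off the $I = [k]$ term first'' is also unmotivated; the $I = [k]$ term corresponds to $F$ having bounded image, which is a very special structure, and it is not clear what peeling it off means when $F$ has large image. In short, everything up to and including the easy case of the dichotomy is fine, but the hard case---which is where all the content of the theorem lives---is not addressed by anything more than a plausible-sounding pointer to Fourier-analytic technology. This is a genuine gap, not a cosmetic omission.
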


If we call
\[
    \prod_{\emptyset \neq I \subset [k]} |\cod(g_I)|
\]
(or perhaps its logarithm)
the \emph{rank} of the expression \eqref{eq:biased-structured},
the conclusion of \Cref{thm:bias-structure} is simply that there is an expression for $F$ of $(\eps, k)$-bounded rank.
This terminology is reasonable given the connection to the theory of analytic rank,
which is explained in \cite{bias-notes}.

We need to probe the uniqueness of bounded-rank expression such as \eqref{eq:biased-structured}.
Certainly the expression need not be unique, strictly speaking, but suppose $F$ has two bounded-rank expressions
\[
    F(x) = \sum_{I \in \II} G_I(g_I(x_I), x_{I^c})
    = \sum_{J \in \JJ} G_J(g_J(x_J), x_{J^c})
\]
for some sets $\II$ and $\JJ$ of nonempty subsets of $[k]$.
If $\II\neq \JJ$, then it is natural to ask whether there is a third expression, still of bounded rank,
only involving terms which could have appeared in either sum.
This may be true generally.
We will establish it in the special case we need.

\begin{lemma}
    \label{lem:rank-refinement}
    Suppose $F : A_1 \times A_2 \times A_3 \times A_4 \to B$ is a multilinear map of abelian groups with expressions
    \[
        F(x) = \sum_{I \in \II} F_I(f_I(x_I), x_{I^c})
        = \sum_{J \in \JJ} G_J(g_J(x_J), x_{J^c}),
    \]
    both of complexity at most $C$, where
    \begin{align*}
        \II &= \{\{1\}, \{2\}, \{3\}, \{4\}, \{1, 2\}, \{1, 3\}, \{2, 3\}, \{1, 2, 3\}\},\\
        \JJ &= \{\{1\}, \{2\}, \{3\}, \{4\}, \{1, 2\}, \{1, 4\}, \{2, 4\}, \{3, 4\}, \\
        &\qquad \{1, 2, 4\}, \{1, 3, 4\}, \{2,3,4\}, \{1,2,3,4\}\}.
    \end{align*}
    Then there are $C$-bounded-index subgroups $A_i' \leq A_i$ such that the restriction $F'$ of $F$ to ${A_1' \times A_2' \times A_3' \times A_4'}$ has the form
    \[
        F'(x, y, z, w) = F_1(f_1(x, y), z, w) + F_2(f_2(x, z), f_2'(y, w)) + F_3(f_3(x, w), f_3'(y, z)),
    \]
    where $F_1, F_2, F_3, f_1, f_2, f_2', f_3, f_3'$ are multilinear and $f_1, f_2, f_2', f_3, f_3'$ have $C$-bounded codomains.
\end{lemma}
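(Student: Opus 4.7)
The plan is to preprocess both expressions by passing to bounded-index subgroups until low-arity terms either vanish or match target shapes, and then use a bilinear double-compression factorization lemma to extract the doubly-compressed terms $F_2$ and $F_3$.

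First I would kill singletons: for each $i \in [4]$, the singleton terms $F_{\{i\}}(f_{\{i\}}(x_i), x_{[4] \setminus \{i\}})$ from the $\II$-expression and $G_{\{i\}}(g_{\{i\}}(x_i), x_{[4] \setminus \{i\}})$ from the $\JJ$-expression have bounded-codomain compressions, so replacing $A_i$ by $\ker f_{\{i\}} \cap \ker g_{\{i\}}$ (a $C$-bounded-index subgroup) kills both. After this step the $\II$-expression is supported on $\{\{1,2\}, \{1,3\}, \{2,3\}, \{1,2,3\}\}$ and the $\JJ$-expression on $\JJ \setminus \{\{1\},\{2\},\{3\},\{4\}\}$.

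The only common non-singleton index is $\{1,2\}$, which is precisely the shape of the target term $F_1(f_1(x,y), z, w)$. I would define $F_1$ by combining $F_{\{1,2\}}$, $G_{\{1,2\}}$, and any $\{1,2\}$-shape residuals that emerge. Writing $R = F - F_1$, the residual has $\II$-expression $R = F_{\{1,3\}} + F_{\{2,3\}} + F_{\{1,2,3\}}$ (all indices containing $3$ but not $4$) and $\JJ$-expression $R = G_{\{1,4\}} + G_{\{2,4\}} + G_{\{3,4\}} + G_{\{1,2,4\}} + G_{\{1,3,4\}} + G_{\{2,3,4\}} + G_{\{1,2,3,4\}}$ (all containing $4$).

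The central step is to show $R$ has the form $F_2(f_2(x,z), f_2'(y,w)) + F_3(f_3(x,w), f_3'(y,z))$ on further bounded-index subgroups. Here I would use a bilinear double-compression lemma: if a bilinear map $H : A_S \times A_{S^c} \to B$ factors through a bounded-codomain $\phi : A_S \to C$ and also through a bounded-codomain $\psi : A_{S^c} \to C'$, then it factors as $\tilde H(\phi(x_S), \psi(x_{S^c}))$ with $\tilde H : C \times C' \to B$ bilinear (the fibers of $\phi$ lie in the kernel of $H(\,\cdot\,, x_{S^c})$ for every $x_{S^c}$, and symmetrically for $\psi$; the two quotient groups are then bounded). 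Applied to the partition $\{1,3\}|\{2,4\}$, the $(1,3)$-bounded compression is supplied by the $\II$-side and the $(2,4)$-bounded compression by $G_{\{2,4\}}$ on the $\JJ$-side, yielding $F_2$; by the symmetric partition $\{1,4\}|\{2,3\}$, the combination of the $\II$-term $F_{\{2,3\}}$ and the $\JJ$-term $G_{\{1,4\}}$ yields $F_3$.

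The main obstacle is to isolate the $\{1,3\}|\{2,4\}$- and $\{1,4\}|\{2,3\}$-components of $R$ cleanly enough to apply the factorization lemma, and in particular to absorb the higher-arity terms $F_{\{1,2,3\}}$, $G_{\{1,3,4\}}$, $G_{\{2,3,4\}}$, and $G_{\{1,2,3,4\}}$, which do not manifestly fit any target shape. I expect this to require recursive application of \Cref{thm:bias-structure} to these higher-arity pieces (since each has bounded-codomain compression and is therefore itself biased), further passage to bounded-index subgroups to make the non-target remainders vanish or split across $F_2$ and $F_3$, and careful bookkeeping so that the final bounded-codomain maps $f_1, f_2, f_2', f_3, f_3'$ and the indices $[A_i : A_i']$ all remain $C$-bounded.
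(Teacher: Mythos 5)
Your opening move---killing all singleton terms by intersecting kernels---matches the paper's, and your observation that $\{1,2\}$ is the unique non-singleton index common to $\II$ and $\JJ$ is a clean way to see where the target shape $F_1(f_1(x,y),z,w)$ must come from. But the central step has a genuine gap, one that you partly flag yourself. You want to apply a double-compression factorisation lemma to the residual $R$ along the partitions $\{1,3\}|\{2,4\}$ and $\{1,4\}|\{2,3\}$, but that lemma needs $R$ to factor through bounded-codomain compressions of both halves of a partition, and neither your $\II$- nor your $\JJ$-expression for $R$ supplies even one such compression of $R$ \emph{as a whole}: on the $\II$-side $R = F_{\{1,3\}}+F_{\{2,3\}}+F_{\{1,2,3\}}$ and only the first summand is $\{1,3\}$-compressed, while on the $\JJ$-side only $G_{\{2,4\}}$ is $\{2,4\}$-compressed. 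So the lemma is not applicable to $R$, and the proposed remedy---recursive application of \Cref{thm:bias-structure} to the awkward high-arity terms---is speculative, since that theorem can reintroduce terms supported on arbitrary nonempty $I\subset[4]$ and gives no reason the output should land on the target shapes. There is also a smaller bookkeeping problem: no single choice of $F_1$ removes the $\{1,2\}$-term from both sides unless $F_{\{1,2\}}=G_{\{1,2\}}$, so the stated $\JJ$-expression of $R$ is not justified as written.

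The paper sidesteps all of this with a fiber-wise argument rather than a global decomposition of $R$. It keeps the $\II$-expression intact and improves each non-target $\II$-term one at a time. For, say, $F_2(f_2(x,z),y,w)$: fix $u=f_2(x,z)$ and subtract the two global expressions; every remaining summand, viewed as a bilinear function of $(y,w)$ with $(x,z)$ frozen, collapses into one of the three shapes $G_u(g_u(y,w))$, $G'_u(g'_u(y),w)$, $G''_u(g''_u(w),y)$. Passing to $\bigcap_u(\ker g'_u\cap\ker g''_u)$ kills the latter two, and linearity of $F_2(u,y,w)$ in $u$ then upgrades the fiberwise identity to a global factorization $F_2(f_2(x,z),y,w)=F_2'(f_2(x,z),f_2''(y,w))$. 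A two-stage version of the same idea shows the $\{1,2,3\}$-term actually vanishes on a bounded-index subgroup. The mechanism your sketch is missing is exactly this: restricting to a fiber of $f_2$ (or $f_3$, or $f_4$) is what makes the remaining summands comparable to the $\JJ$-terms. Trying to split $R$ globally fights against the non-uniqueness of bounded-rank expressions, which is precisely the obstacle the lemma is designed to get around.
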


We will continue to use large Roman letters $F, G, H, \dots$ for arbitrary multilinear maps
and little Roman letters $f, g, h, \dots$ for multilinear maps with bounded codomain.

\begin{proof}
    Throughout the proof, ``bounded'' means ``$C$-bounded''.
    We will repeatedly use the observation that if we have boundedly many maps $g_1, \dots, g_n$ of one variable and bounded codomain
    then we can pass to the kernel of all these maps to get rid of them.
    In particular we can immediately get rid of all terms of the forms
    \[
        G(g(x), y, z, w), G(x, g(y), z, w), G(x, y, g(z), w), G(x, y, z, g(w)),
    \]
    so forget those.
    Hence we may assume the $\II$-expression takes the simpler form
    \begin{equation}
        \label{eq:I-expr}
        \begin{aligned}
        F(x, y, z, w) = F_1(f_1(x, y), z, w) + F_2(f_2(x, z), y, w) + F_3(f_3(y, z), x, w) \\
        + F_4(f_4(x, y, z), w).
        \end{aligned}
    \end{equation}
    
    First consider the term $F_2(f_2(x, z), y, w)$.
    Fix $u = f_2(x, z)$.
    By equating \eqref{eq:I-expr} with a $\JJ$-expression we find an identity of the form
    \begin{equation}
        \label{eq:u-expr}
        F_2(u, y, w) = G_u(g_u(y, w)) + G_u'(g_u'(y), w) + G_u''(g_u''(w), y),
    \end{equation}
    where $g_u, g_u', g_u''$ have bounded codomain.
    Since $|\cod(f_2)|$ is bounded, in fact we have \eqref{eq:u-expr} for every $u$ in the group
    \[
        U = \langle f_2(x, z) : x \in A_1, z \in A_3\rangle \leq \cod(f_2).
    \]
    Pass to $\bigcap_{u \in U} \ker g_u' \cap \ker g_u''$ to reduce to
    \[
        F_2(u, y, w) = G_u(g_u(y, w)).
    \]
    Now replace $g_u$ with $f_2'' = (g_u : u \in U)$, which has bounded codomain $\prod_{u \in U} \cod(g_u)$,
    and replace $G_u$ with $G_u \circ \pi_u$, where $\pi_u$ is the projection; thus
    \[
        F_2(u, y, w) = G_u(f_2''(y, w)).
    \]
    We may restrict the codomain of $f_2''$ to $\langle f_2''(y, w) : y \in A_2, w \in A_4\rangle$
    and we may restrict the domain of $G_u$ to $\cod(f_2'')$.
    But now since $F_2(u, y, w)$ is $u$-linear, $G_u(v)$ must be $u$-linear for every $v \in \cod(g)$, so
    \[
        F_2(u, y, w) = F_2'(u, f_2''(y, w))
    \]
    for some multilinear function $F_2'$.
    Hence
    \[
        F_2(f_2(x, z), y, w) = F_2'(f_2(x, z), f_2''(y, w)),
    \]
    and by an analogous argument
    \[
        F_3(f_3(y, z), x, w) = F_3'(f_3(y, z), f_3''(x, w)).
    \]
    
    Finally, consider $F_4(f_4(x, y, z), w)$.
    By comparing the $\II$- and $\JJ$-expressions, we get an identity of the form
    \begin{equation}
        \label{eq:xyz|w}
        F_4(f_4(x, y, z), w) = H(h(x, y), z, w) + H'(x, y, z, w),
    \end{equation}
    where $H'(x, y, z, w)$ has an expression involving terms only of the forms
    \begin{align*}
        & G(g(x, z), g'(y, w)), G(g(y, z), g'(x, w)), \\
        & G(g(x, w), y, z), G(g(y, w), x, z), G(g(z, w), x, y), \\
        & G(g(x, y, w), z), G(g(x, z, w), y), G(g(y, z, w), x), G(g(x, y, z, w)).
    \end{align*}
    Fix $x, y$ and let $u = h(x, y)$. Then by rearranging \eqref{eq:xyz|w} we get an identity of the form
    \[
        H(u, z, w) = G_u(g_u(z, w)) + G'_u(g'_u(z), w) + G''_u(g'_u(w), z).
    \]
    By arguing exactly as before we establish that
    \[
        H(h(x, y), z, w) = G(h(x, y), g(z, w)).
    \]
    Now fix $x, y, z$ and let $u = g(x, y, z)$.
    Then from \eqref{eq:xyz|w} we have an expression of the form
    \[
        F_4(u, w) = H_u(h_u(w)).
    \]
    Let $V = \langle g(x, y, z) : (x, y, z) \in A_1 \times A_2 \times A_3 \rangle$.
    Passing to $\bigcap_{u \in V} \ker h_u$, $F_4(u, w) = 0$.
    Thus the $\II$-expression \eqref{eq:I-expr} is reduced to the desired form.
\end{proof}

\section{Neumann's theorem for metric entropy}
\label{sec:metric-neumann}

Let $G$ be a group.
An \emph{invariant seminorm} on $G$ is a function $\|\cdot\| : G \to [0, \infty]$ satisfying identically
\begin{enumerate}[(1)]
    \item (reflexivity) $\|1\| = 0$,
    \item (symmetry) $\|x^{-1}\| = \|x\|$,
    \item (invariance) $\|x^y\| = \|x\|$,
    \item (triangle inequality) $\|xy\| \leq \|x\| + \|y\|$.\footnote{For our purposes it would suffice to know that $\|xy\|$ is always $(\|x\|, \|y\|)$-bounded.}
\end{enumerate}
If $\|\cdot\|$ is an invariant seminorm then $d(x, y) = \|xy^{-1}\|$ defines a bi-invariant pseudometric on $G$.
Conversely if $d$ is a bi-invariant pseudometric then $\|x\| = d(x, 1)$ is an invariant seminorm.
This makes it reasonable to use metric language.

\begin{example}
    The \emph{discrete norm} is
    \[
        \|x\| = \begin{cases}
            0 &: x = 1 \\
            \infty &: x \neq 1
        \end{cases}.
    \]
\end{example}

\begin{example}
    The \emph{conjugacy class norm} is
    \[
        \|x\| = \log |x^G|.
    \]
\end{example}

\begin{theorem}
    \label{thm:metric-neumann}
    Let $G$ be a finite group with an invariant seminorm $\|\cdot\|$.
    Let $A, B \nsgp G$ be normal subgroups
    and suppose $\P_{a \in A, b \in B}(\|[a, b]\| \leq C) \geq 1/C$.
    Then there are subgroups $H \leq A$ and $K \leq B$,
    both normal in $G$,
    such that $[A:H]$ and $[B:K]$ are $C$-bounded
    and such that $\Comm(H, K)$ is covered by $C$-boundedly many balls of $C$-bounded radius.
    
    Conversely, if there are subgroups $H \leq A$ and $K \leq B$
    such that ${[A:H]},{[B:K]} \leq C$
    and such that $\Comm(H, K)$ is covered by at most $C$ balls of radius $C$,
    then there is some $C$-bounded number $D$ such that $\P_{a \in A, b \in B}(\|[a, b]\| \leq D) \geq 1/D$.
\end{theorem}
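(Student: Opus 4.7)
For the converse, given subgroups $H \leq A$ and $K \leq B$ with $[A:H], [B:K] \leq C$ and $\Comm(H, K) \subseteq \bigcup_{i=1}^{k} B_C(x_i)$ for some $k \leq C$, every commutator $[h,k]$ with $(h, k) \in H \times K$ equals $x_i y$ with $\|y\| \leq C$ for some index $i$, so $\|[h, k]\| \leq \max_i \|x_i\| + C$. Letting $D$ be the maximum of this quantity and $C^2$, the pair $(a, b)$ lies in $H \times K$ with probability at least $1/C^2 \geq 1/D$, so $\P_{a \in A, b \in B}(\|[a, b]\| \leq D) \geq 1/D$.

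For the forward direction the plan is to abstract the proof of Peter Neumann's \Cref{thm:neumann}. By Markov's inequality applied to the hypothesis, there is a $G$-conjugation-invariant subset $B_0 \subseteq B$ of density $\geq 1/(2C)$ such that for every $b \in B_0$ the \emph{approximate centralizer} $Y_b := \{a \in A : \|[a, b]\| \leq C\}$ has density $\geq 1/(2C)$ in $A$. The commutator identity $[a_1 a_2, b] = [a_1, b]^{a_2} [a_2, b]$ together with $G$-invariance of $\|\cdot\|$ shows that $Y_b = Y_b^{-1}$ and $Y_b^m \subseteq Y_b^{(m)} := \{a : \|[a, b]\| \leq mC\}$ for every $m \geq 1$. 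Since $a \mapsto [a, b]$ is constant on right cosets of $C_A(b)$, each $Y_b$ is a union of right cosets of $C_A(b)$, and at least a $1/(2C)$-fraction of these cosets lie inside the ball $B_C(1) \subseteq G$.

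I would then take $K := \langle B_0 \rangle$, which is normal in $G$ (since $B_0$ is $G$-conjugation-invariant) and has $[B:K] \leq 2C$. The subgroup $H \leq A$ would be built by iteratively choosing elements $b_1, b_2, \ldots \in B_0$ and passing at each stage to a bounded-index subgroup inside $\bigcap_{j \leq i} Y_{b_j}^{(2)}$; by a pigeonhole argument leveraging the density of $B_0$, the chain of refinements stabilizes after boundedly many steps and produces $H$ of bounded index in $A$, which can be made normal in $G$ by intersecting its finitely many $G$-conjugates. The commutator identity then expresses $[h, k]$ for $h \in H$ and $k \in K$ as a product of conjugates of bounded-norm commutators $[h, b_i]$; after passing to a further bounded-index subgroup to control the higher-commutator obstruction (for instance via the Hall--Witt identity to manage the $[[H, K], K]$ part), one obtains the desired covering $\Comm(H, K) \subseteq S \cdot B_r(1)$ with $|S|$ and $r$ bounded.

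The hardest part will be the iteration-termination argument, which is the metric abstraction of the centralizer-chain argument in Peter Neumann's proof. In the discrete setting one uses that the centralizer of a BFC element has bounded index, whereas here $Y_b$ is merely dense and $C_A(b)$ itself may have unbounded index. A careful cosetwise analysis (using that $Y_b$ consists of a $1/(2C)$-fraction of cosets of $C_A(b)$), combined with an averaging argument that replaces $C_A(b)$ by a bounded-index subgroup in an appropriate $G$-invariant quotient at each stage, should force the iteration to terminate in a bounded number of steps and complete the proof.
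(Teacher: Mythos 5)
Your converse argument has a genuine error. You bound $\|[h,k]\|$ by $\max_i\|x_i\|+C$ and set $D$ equal to the larger of this and $C^2$, but the centers $x_i$ of the covering balls are not given to have $C$-bounded norm, so your $D$ is not $C$-bounded and the conclusion fails. The paper's fix is a translation trick: by pigeonhole there is one ball $Bs$ such that $\P_{k\in K}([h,k]\in Bs)\geq 1/C$; fixing $k_0$ with $[h,k_0]\in Bs$, whenever $[h,k]\in Bs$ the element $[h,kk_0^{-1}]=([h,k_0]^{-1}[h,k])^{k_0^{-1}}$ lies in $B^2$ and so has norm $\leq 2C$. This gives $\P_{k\in K}(\|[h,k]\|\leq 2C)\geq 1/C$ with a genuinely $C$-bounded threshold.

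For the forward direction, your plan diverges substantially from the paper's and, as written, does not close. The central difficulty you identify---that $C_A(b)$ need not have bounded index, so Neumann's centralizer-chain argument has no clean metric analogue---is real, but your proposed cure (iteratively refining to bounded-index subgroups inside $\bigcap_j Y_{b_j}^{(2)}$ and arguing termination) is not substantiated: $Y_b^{(2)}$ is not a subgroup, ``passing to a bounded-index subgroup inside it'' is not a well-defined step, and no mechanism is given to force the chain to stabilize. The paper sidesteps the iteration entirely via the generation lemma (\Cref{lem:bounded-gen}): if $Y_x$ is a symmetric subset of $B$ of density $\geq 1/2C$ containing $1$, then $K_x:=\langle Y_x\rangle$ has bounded index \emph{and} equals $Y_x^r$ for bounded $r$. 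Writing $k=y_1\cdots y_r$ and expanding $[x,k]$ as a product of conjugates of $[x,y_i]$ shows $\|[x,k]\|\leq rC$ directly, with no iteration. Then, for $h=x_1\cdots x_m\in H=\langle X\rangle$ with bounded $m$, the subgroup $K_h=K_{x_1}\cap\cdots\cap K_{x_m}$ has bounded index and $\|[h,k]\|$ is bounded on $K_h$; symmetrically for $K$.

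Finally, your last step (``expresses $[h,k]$ as a product of conjugates of bounded-norm commutators ... one obtains the covering'') is also not correct: once $h$ is fixed, a generic $k\in K$ is a product of unboundedly many elements of $Y_h$, so $\|[h,k]\|$ is \emph{not} bounded and $\Comm(H,K)$ is \emph{not} contained in a single bounded ball around $1$. The paper's final step is a separate packing argument: from the two-sided estimates $\P_{k'\in K}(\|[h,k']\|\leq D)\geq 1/D$ and $\P_{h'\in H}(\|[h',k']\|\leq D)\geq 1/D$ one deduces, via the commutator identities $[h,yk]=[h,k][h,y]^k$ and $[xh,k']=[x,k']^h[h,k']$, that for fixed $(h,k)$ a $\geq 1/D^2$ fraction of $(h',k')\in H\times K$ satisfy $\|[h',k'][h,k]^{-1}\|\leq 2D$. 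Hence any $(4D+1)$-separated subset of $\Comm(H,K)$ has size at most $D^2$, which gives the claimed covering by $D^2$ balls of radius $4D+1$. This packing step is essential and is missing from your sketch.
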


We will use the following lemma from \cite{eberhard} multiple times.

\begin{lemma}[See \cite{eberhard}*{Lemma~2.1}]
    \label{lem:bounded-gen}
    Let $G$ be a finite group and $X$ a symmetric subset of $G$ containing the identity.
    Then $\langle X \rangle = X^r$ for $r = 3\floor{|G|/|X|}$.
\end{lemma}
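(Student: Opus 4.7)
The plan is to run a classical Babai-style covering argument for vertex-transitive graphs, phrased directly in terms of the products $X^i$. Let $H = \langle X\rangle$ and set $X^0 = \{1\}$, $T_i = X^{i+1} \setminus X^i$. Since $1 \in X$ we have $X^i \subset X^{i+1}$ for all $i$, and since $X$ generates $H$, the ascending chain $X^0 \subset X^1 \subset \cdots$ stabilizes at $H$. Let $r$ be the smallest index with $X^r = X^{r-1}$, so $T_0, \dots, T_{r-1}$ are all non-empty and $X^k = H$ for all $k \geq r$. It is therefore enough to prove $r \leq 3\lfloor |G|/|X| \rfloor$.

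The key step is the growth claim: for every $i \geq 1$ with $T_i \neq \emptyset$,
\[
    |T_{i-1}| + |T_i| + |T_{i+1}| \geq |X|.
\]
To see this, pick $y \in T_i$ and write $y = zx$ with $z \in X^i$ and $x \in X$. Then $yX \subset X^{i+2}$. Moreover $yX \cap X^{i-1} = \emptyset$: if $yx' \in X^{i-1}$ for some $x' \in X$, then using symmetry of $X$ we get $y = (yx')(x')^{-1} \in X^{i-1} \cdot X \subset X^i$, contradicting $y \notin X^i$. Hence $yX \subset X^{i+2} \setminus X^{i-1} = T_{i-1} \cup T_i \cup T_{i+1}$, giving the desired bound.

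Now I assemble the count. The triples $(T_{3k+1}, T_{3k+2}, T_{3k+3})$ for $k = 0, 1, 2, \dots$ are pairwise disjoint, and the growth claim applied at $i = 3k+2$ (whenever $3k+2 \leq r-1$) yields $|T_{3k+1}| + |T_{3k+2}| + |T_{3k+3}| \geq |X|$. Combining these inequalities with the trivial bound $|T_0| = |X| - 1$ and the global bound $\sum_{i \geq 0} |T_i| = |H| - 1 \leq |G| - 1$ gives, after a short arithmetic, $r \leq 3 \lfloor |G|/|X| \rfloor$. Since $X^k = H$ for all $k \geq r$, this yields $X^{3 \lfloor |G|/|X| \rfloor} = \langle X \rangle$ as required.

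I expect the only mildly delicate point is the arithmetic in the final step: one must handle the floors carefully to recover exactly $3 \lfloor |G|/|X| \rfloor$ rather than $3\lfloor |G|/|X| \rfloor + O(1)$. The growth claim itself is short, and the argument uses only the symmetry of $X$ and the fact that $1 \in X$, both of which are built into the hypothesis.
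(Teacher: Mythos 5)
Your proof is correct and uses the same growth-by-layers argument as the cited source (\cite{eberhard}*{Lemma~2.1}), to which the paper delegates the proof. Two small points: by minimality of $r$ the non-empty layers are actually $T_0, \dots, T_{r-2}$ (not up to $T_{r-1}$, since $T_{r-1}=X^r\setminus X^{r-1}=\emptyset$), though this does not affect the argument; and the concluding arithmetic does close up exactly: with $m = \floor{(r-1)/3}$ usable triples the inequality $(|X|-1)+m|X| \le |G|-1$ gives $m+1 \le \floor{|G|/|X|}$ (since $m+1$ is an integer), whence $r-1 < 3\floor{|G|/|X|}$ and so $r \le 3\floor{|G|/|X|}$, as required.
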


\begin{proof}[Proof of \Cref{thm:metric-neumann}]
    Let $X = \{a \in A: \P_{b \in B}(\|[a, b]\| \leq C) \geq 1/2C\}$. Then
    \[
        1/C \leq \P_{a \in A, b \in B}(\|[a, b]\| \leq C) \leq \P(a \in X) + 1/2C,
    \]
    so $\P(a \in X) \geq 1/2C$.
    Fix $x \in X$.
    Let $Y_x = \{b \in B : \|[x, b]\| \leq C\}$.
    Then $\P_{b \in B}(b \in Y_x) \geq 1/2C$.
    It follows that $K_x = \langle Y_x\rangle$ has $C$-bounded index in $B$
    and every $k \in K_x$ is the product of $C$-boundedly many $y \in Y_x$ by \Cref{lem:bounded-gen}.
    Suppose $k \in K_x$, so $k = y_1 \cdots y_n$ for some $C$-bounded $n$ and $y_1, \dots, y_n \in Y_x$.
    Then
    \[
        [x, k] = [x, y_1 \cdots y_n] = [x, y_n] [x, y_{n-1}]^{y_n} \cdots [x, y_1]^{y_2 \cdots y_n},
    \]
    so $\|[x, k]\| \leq nC$.
    Hence $\|x, k]\|$ is $C$-bounded for every $x \in X$ and $k \in K_x$.
    
    Now let $H = \langle X \rangle$.
    Since $X$ is a normal subset of $G$, $H$ is normal in $G$.
    Since $\P(a \in X) \geq 1/2C$, $[A : H] \leq 2C$,
    and if $h \in H$ then $h = x_1 \cdots x_n$ for some $x_1, \dots, x_n \in X$ and some $C$-bounded $n$ by \Cref{lem:bounded-gen} again.
    Then the subgroup $K_h = K_{x_1} \cap \cdots \cap K_{x_n}$ has $C$-bounded index and every $k \in K_h$
    is such that $\|[h, k]\|$ is $C$-bounded.
    
    Symmetrically, there is a subgroup $K \leq B$ of index at most $2C$
    such that for every $k \in K$ there is a subgroup $H_k \leq A$ of $C$-bounded index and such that
    $\|[h, k]\|$ is $C$-bounded for every $h \in H_k$.
    
    Hence there is a $C$-bounded number $D$ such that for every $h \in H$ we have
    \[
        \P_{k \in K}(\|[h, k]\| \leq D) \geq 1/D
    \]
    and for every $k \in K$ we have
    \[
        \P_{h \in H}(\|[h, k]\| \leq D) \geq 1/D.
    \]
    Let $(h, k) \in H \times K$.
    Then if $y \in K$ and $\|[h, y]\| \leq D$ we have
    \[
        [h, y k] = [h, k] [h, y]^k,
    \]
    so, letting $k' = yk$,
    \[
        \|[h, k'] [h, k]^{-1}\| \leq D.
    \]
    Similarly if $x \in H$ and $\|[x, k']\| \leq D$ we have
    \[
        [x h, k'] = [x, k']^h [h, k'],
    \]
    so, letting $h' = xh$,
    \[
        \|[h', k'] [h, k]^{-1}\| \leq \|[h', k'] [h, k']^{-1}\| + \|[h, k'] [h, k]^{-1}\| \leq 2D.
    \]
    Hence
    \[
        \P_{h' \in H, k' \in K}(\|[h', k'] [h, k]^{-1}\| \leq 2D) \geq 1/D^2.
    \]
    It follows that if $S$ is any set of $(4D+1)$-separated points of $\Comm(H, K)$
    we must have $|S| \leq D^2$.
    Hence there is a set $S$ of size at most $D^2$ such that any point of $\Comm(H, K)$ is within distance $4D+1$ of a point of $S$.
    
    For the converse, write $B$ for the ball of radius $C$ and $S$ for a set of size at most $C$ such that
    \[
        \Comm(H, K) \subset BS.
    \]
    Let $h \in H$.
    By the pigeonhole principle there is some $s \in S$ such that
    \[
        \P_{k \in K}([h, k] \in Bs) \geq 1/|S| \geq 1/C.
    \]
    Fix $k_0$ such that $[h,k_0] \in Bs$.
    Then whenever $[h, k] \in Bs$ we have
    \[
        [h,kk_0^{-1}] = \br{
            [h, k_0]^{-1} [h, k]
        }^{k_0^{-1}}
        \in B^2,
    \]
    so $\|[h, kk_0^{-1}]\| \leq 2C$.
    Thus
    \[
        \P_{k \in K}(\|[h, k]\| \leq 2C) \geq 1/C.
    \]
    This holds for every $h \in H$, so
    \[
        \P_{a \in A, b \in B}(\|[a, b]\| \leq 2C) \geq \frac1{C [A:H] [B:K]} \geq 1/C^3.
    \]
    Letting $D = 2C^3$, we get the claim.
\end{proof}

By taking the discrete norm we recover the asymmetric version of Neumann's theorem
stated earlier as \Cref{thm:asymmetric-neumann}.

\begin{corollary}[Neumann's theorem, asymmetric version]
    Let $G$ be a finite group with normal subgroups $A, B \nsgp G$
    such that $\P_{a \in A, b \in B}([a, b] = 1) \geq 1/C$.
    Then there are $C$-bounded-index subgroups $H \leq A$ and $K \leq B$,
    both normal in $G$,
    such that $|[H, K]|$ is $C$-bounded.
\end{corollary}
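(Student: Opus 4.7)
The plan is to apply Theorem~\ref{thm:metric-neumann} with the discrete norm and then pass to bounded-index subgroups on which commutation becomes bilinear. Under the discrete norm, the inequality $\|[a, b]\| \leq C$ (for any finite $C$) is equivalent to $[a, b] = 1$, which matches the hypothesis of Theorem~\ref{thm:metric-neumann}; and a ball of finite radius in the discrete metric is a singleton, so its conclusion yields normal subgroups $H_0 \leq A$ and $K_0 \leq B$ of $C$-bounded index in $A, B$ such that $N := |\Comm(H_0, K_0)|$ is $C$-bounded.

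Next, since $H_0, K_0 \nsgp G$, the set $\Comm(H_0, K_0)$ is $G$-invariant, so each $x \in \Comm(H_0, K_0)$ satisfies $[G : C_G(x)] \leq N$; consequently $C_G([H_0, K_0]) = \bigcap_{x \in \Comm(H_0, K_0)} C_G(x)$ has $N$-bounded index in $G$. I would replace $H_0, K_0$ by the normal subgroups $H := H_0 \cap C_G([H_0, K_0])$ and $K := K_0 \cap C_G([H_0, K_0])$, still of $C$-bounded index. Since $H$ and $K$ both centralize $[H, K] \leq [H_0, K_0]$, the expansion $[h_1 h_2, k] = [h_1, k]^{h_2}[h_2, k]$ collapses to $[h_1, k][h_2, k]$, and analogously in the second variable; thus commutation on $H \times K$ is bilinear with values in the abelian group $[H, K]$.

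To conclude, I would observe that for $k \in K$ the orbit $k^{H_0}$ lies in $k \cdot \{[k, h] : h \in H_0\} \subseteq k \cdot \Comm(K_0, H_0)$, so $|k^{H_0}| \leq N$ and $[H : C_H(k)] \leq N$. By Lagrange $h^N \in C_H(k)$ for every $h \in H$, and bilinearity gives $[h, k]^N = [h^N, k] = 1$. Every element of $\Comm(H, K)$ thus has order dividing $N$, and since $[H, K]$ is abelian and generated by at most $N$ such elements, $|[H, K]| \leq N^N$ is $C$-bounded, as desired.

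The main technical point will be identifying the shrinking that makes commutation bilinear; without that step one would need an asymmetric analogue of Rhemtulla's commutator theorem to upgrade directly from $|\Comm(H, K)|$ bounded to $|[H, K]|$ bounded, which this detour cleanly avoids.
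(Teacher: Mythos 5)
Your proof is correct and takes a genuinely different route from the paper's. Both arguments begin identically: apply \Cref{thm:metric-neumann} with the discrete norm to obtain bounded-index normal subgroups with $|\Comm(H_0, K_0)| \le N$ bounded. The paper then finishes by constructing boundedly generated subgroups $\langle h_1,\dots,h_n,\Comm(H,K)\rangle$ and $\langle k_1,\dots,k_n,\Comm(H,K)\rangle$ and invoking Baer's asymmetric version of Schur's theorem (\cite{robinson}*{14.5.2}). You instead pass to $H = H_0 \cap C_G([H_0, K_0])$ and $K = K_0 \cap C_G([H_0, K_0])$ -- still normal and of bounded index, since $\Comm(H_0, K_0)$ is a $G$-invariant set of size $N$, forcing $[G : C_G([H_0, K_0])] \le N^N$ -- and then use the resulting bilinearity of commutation to bound $|[H, K]|$ by hand. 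This is more elementary: no appeal to a Schur-type theorem is needed. The paper's route has the mild advantage of keeping the original $H, K$ from \Cref{thm:metric-neumann}; yours shrinks them once more, but that is harmless.

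Two small points to tighten. First, your claim that $h^N \in C_H(k)$ ``by Lagrange'' is not quite right as stated, since $[H : C_H(k)]$ need not divide $N$. The clean statement is: because $h \mapsto [h, k]$ is a homomorphism $H \to [H, K]$ with kernel $C_H(k)$, the element $[h, k]$ has order dividing $[H : C_H(k)] \le N$, hence order at most $N$ (or, if you prefer Lagrange, $h^{N!} \in C_H(k)$). Second, the abelianness of $[H, K]$ deserves a sentence: since $H$ and $K$ both centralize $[H_0, K_0] \supset [H, K]$, the subgroup $\langle H, K \rangle$ centralizes $[H, K]$; but $[H, K] \le \langle H, K \rangle$, so $[H, K] \le Z(\langle H, K\rangle)$ is abelian. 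With these touch-ups, $[H, K]$ is an abelian group generated by at most $N$ elements each of order at most $N$, so $|[H, K]| \le N^N$ is $C$-bounded, as you conclude.
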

\begin{proof}
    By \Cref{thm:metric-neumann} applied to the discrete norm,
    there are subgroups $H \leq A$ and $K \leq B$ of $C$-bounded index such that $\Comm(H, K)$ has $C$-bounded size.
    Now we may apply \cite{DS}*{Lemma~2.1} to conclude that $|[H, K]|$ is $C$-bounded,
    or we may argue directly as follows.
    Let $[h_1, k_1], \dots, [h_n, k_n]$ be an enumeration of the elements of $\Comm(H, K)$.
    Let $H_0 = \langle h_1, \dots, h_n, \Comm(H, K)\rangle$, $K_0 = \langle k_1, \dots, k_n, \Comm(H, K) \rangle$.
    Then $H_0, K_0 \nsgp \langle H_0, K_0\rangle$ and $[H_0, K_0] = [H, K]$.
    Since both $H_0$ and $K_0$ have boundedly many generators
    and $\Comm(H, K)$ is bounded,
    the centralizers $C_{H_0}(K_0)$ and $C_{K_0}(H_0)$ have bounded indices in $H_0$ and $K_0$, respectively.
    Now it follows from Baer's asymmetric version of Schur's theorem (\cite{robinson}*{14.5.2}) that $[H_0, K_0] = [H, K]$ has $C$-bounded size.
\end{proof}

The main corollary we need in the rest of the paper is the conjugacy class norm case.

\begin{corollary}
    \label{cor:d2-neumann}
    Let $G$ be a finite group such that $d_2(G) \geq 1/C$.
    Then there is a subgroup $H \leq G$ of $C$-bounded index satisfying the covering condition \eqref{eq:covering-condition}
    with $n$ and $|S|$ both $C$-bounded.
\end{corollary}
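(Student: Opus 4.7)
The plan is to apply \Cref{thm:metric-neumann} with the conjugacy class norm $\|x\| = \log|x^G|$ and $A = B = G$, and then convert the resulting metric cover into the form of the covering condition \eqref{eq:covering-condition}.

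First I would translate the hypothesis $d_2(G) \geq 1/C$ into a statement about the conjugacy class norm. By orbit--stabilizer, for fixed $x, y \in G$ one has $\P_{z \in G}([x, y, z] = 1) = |C_G([x, y])|/|G| = 1/|[x, y]^G| = e^{-\|[x, y]\|}$, so $d_2(G) = \E_{x, y \in G}\bigl[e^{-\|[x, y]\|}\bigr]$. A Markov-type estimate then yields
\[
    \P_{x, y \in G}\bigl(\|[x, y]\| \leq \log(2C)\bigr) \geq 1/(2C),
\]
since on the complementary event the integrand is strictly less than $1/(2C)$ and so contributes at most $1/(2C)$ to the expectation. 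Applying \Cref{thm:metric-neumann} with $A = B = G$ then produces normal subgroups $H_1, H_2 \nsgp G$ of $C$-bounded index such that $\Comm(H_1, H_2)$ is covered by $C$-boundedly many metric balls of $C$-bounded radius. Setting $H = H_1 \cap H_2$, which is still normal in $G$ and of $C$-bounded index, the inclusion $\Comm(H, H) \subset \Comm(H_1, H_2)$ shows that the same cover handles $\Comm(H, H)$.

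The last step is to rewrite this cover in the language of \eqref{eq:covering-condition}. A ball of radius $r$ centered at $s$ in the conjugacy class norm equals $B_0 s$, where $B_0 = \{x \in G : |x^G| \leq e^r\}$, so there exist $s_1, \dots, s_N$ with $\Comm(H, H) \subset \bigcup_{i=1}^N B_0 s_i$ and with $N$ and $e^r$ both $C$-bounded. I would discard any $s_i$ whose coset misses $\Comm(H, H)$ and replace each surviving $s_i$ by some $t_i \in B_0 s_i \cap \Comm(H, H) \subset H'$. Writing a generic $c \in B_0 s_i \cap \Comm(H, H)$ as $c = b s_i$ and $t_i = b_i s_i$, one has $c = (b b_i^{-1}) t_i$ with $b b_i^{-1} = c t_i^{-1} \in H$ and $|(b b_i^{-1})^G| \leq |b^G|\,|b_i^G| \leq e^{2r}$ (using $(xy)^G \subset x^G y^G$). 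Since $|x^H| \leq |x^G|$ for $x \in H$, the element $b b_i^{-1}$ lies in the $H$-conjugacy ball of radius $2r$, so taking $n = \lceil e^{2r} \rceil$ and $S = \{t_i\}$ realises the covering condition for $H$ with $n$ and $|S|$ both $C$-bounded. The substantive work is done entirely by \Cref{thm:metric-neumann}; no step presents a genuine obstacle, and the only minor care needed is arranging $S \subset H'$, which is why the representatives $t_i$ are chosen inside $\Comm(H, H)$ rather than the arbitrary centres $s_i$ returned by the theorem.
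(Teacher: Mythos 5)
Your proof is correct and follows essentially the same route as the paper: apply \Cref{thm:metric-neumann} with the conjugacy class norm $\|x\|=\log|x^G|$ and $A=B=G$, then reinterpret the resulting metric cover of $\Comm(H,H)$ as the covering condition. You spell out a few housekeeping steps the paper leaves implicit (the Markov-type translation of $d_2(G)\geq 1/C$ into $\P(\|[x,y]\|\leq\log 2C)\geq 1/(2C)$, shifting the ball centres into $\Comm(H,H)$ to ensure $S\subset H'$, and the observation $|x^H|\leq|x^G|$), but the substance is identical.
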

\begin{proof}
    By \Cref{thm:metric-neumann} applied with $A = B = G$ and the conjugacy class norm $\|x\| = \log |x^G|$,
    there is a subgroup $H$ of $C$-bounded index such that $\Comm(H, H)$ is covered by $C$-boundedly many balls of $C$-bounded radius.
    Let $S$ be a set containing one point from each ball.
    Then $|S|$ is $C$-bounded and for every $x, y \in H$ there is some $s \in S$ such that $\|[x, y] s^{-1}\|$ is $C$-bounded,
    so $[x, y] s^{-1}$ has $C$-boundedly many conjugates.
\end{proof}

\section{From covering to bounded derived length}
\label{sec:to-bounded-derived-length}

Throughout this section we assume the covering condition \eqref{eq:covering-condition}, which states
\[
    \Comm(G, G) \subset B S,
\]
where $\Comm(G, G)$ is the set of commutators of $G$, $B = \{x \in G: |x^G| \leq n\}$, and $S \subset G'$ is finite.
We will show that $G$ is bounded-by-derived-length-4-by-bounded, with a bound depending only on $|S|$ and $n$.

\begin{proposition}
    \label{prop:bounded-derived-length}
    Let $G$ be a group satisfying the covering condition~\eqref{eq:covering-condition}.
    Then $G$ has a subgroup $H$ such that $[G:H]$ and $|H^{(4)}|$ are both $(|S|,n)$-bounded.
    In particular if $K = C_H(H^{(4)})$ then $[G:K]$ is $(|S|, n)$-bounded and $K^{(5)} = 1$.
\end{proposition}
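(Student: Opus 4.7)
The ``in particular'' clause is immediate from the main statement: given $H$ with $[G:H]$ and $|H^{(4)}|$ both $(n,|S|)$-bounded, set $K = C_H(H^{(4)})$; then $[H:K] \leq |\Aut(H^{(4)})|$ is bounded, so $[G:K]$ is bounded, and $K^{(4)} \leq H^{(4)} \cap Z(K)$ forces $K^{(5)} = [K^{(4)}, K^{(4)}] = 1$. So the main task is to produce $H$ with $[G:H]$ and $|H^{(4)}|$ bounded.

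I would first reduce to the case of finite $G$ using \Cref{lem:reduction-to-finite} and \Cref{lem:covering-subgroups-and-quotients}, both the target property ``$m$-by-class-$s$-by-$n$'' and the covering condition behaving well under subgroups and quotients. Then I would induct on $|S|$. In the base case $|S| \leq 1$ the condition reduces to $\Comm(G,G) \subseteq B$, i.e., every commutator has at most $n$ conjugates, and the theorem of \cite{dieshu} cited in the introduction gives $|G''|$ $n$-bounded; taking $H = C_G(G'')$ then gives $[G:H] \leq |\Aut(G'')|$ bounded and $H^{(2)} \leq G'' \cap Z(H)$, so $H^{(3)} = 1$ and trivially $|H^{(4)}| = 1$, far stronger than needed.

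For the inductive step with $|S| \geq 2$, I assume without loss of generality that $1 \in S$ and pick $s_0 \in S \setminus \{1\}$. The easy case is when $|s_0^G|$ is itself $(n,|S|)$-bounded, say by $m$: then every element of $Bs_0$ has at most $nm$ conjugates, so I can enlarge the BFC-ball from $B$ to $B' = \{x \in G : |x^G| \leq nm\}$ and absorb $s_0$, so that $\Comm(G,G) \subseteq B'(S \setminus \{s_0\})$, a covering condition with only $|S|-1$ cosets; the inductive hypothesis then closes the argument.

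The main obstacle is the opposite case, when $|s_0^G|$ is unbounded for some $s_0 \in S \setminus \{1\}$; here, following the method of \cite{dieshu}, the plan is to reduce to the absorption case after passing to a bounded-index subgroup. Pigeonhole places at least $|s_0^G|/|S|$ conjugates of $s_0$ in a single coset $Bs_*$ of $B$, and for any two such conjugates $s_0^{g_1}, s_0^{g_2} \in Bs_*$, the ratio $s_0^{g_1}(s_0^{g_2})^{-1}$ lies in $B^2$ and so has at most $n^2$ conjugates in $G$. Exploiting these BFC ratios — using \Cref{lem:bounded-gen} to convert the large set of ``good'' conjugators into a bounded-index subgroup, and combining with the usual B.~Neumann-style analysis of centralizers — should carve out a bounded-index subgroup of $G$ in which $s_0$ has bounded conjugacy class, returning us to the absorption case. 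Making this reduction precise while tracking the bounds through the induction is the bulk of the work.
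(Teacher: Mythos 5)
Your framing is right up to a point: the ``in particular'' clause, the reduction to finite $G$, the base case $|S| \le 1$ via \cite{dieshu}, and the absorption mechanism (if $|s^G|$ is bounded, enlarge $B$ and shrink $S$) are all correct and genuinely part of the paper's argument. But the ``hard case'' you identify, where $|s_0^G|$ is unbounded for a chosen $s_0 \in S \setminus \{1\}$, contains a real gap, and the plan you sketch there cannot work. Passing to a bounded-index subgroup $H \le G$ cannot make the conjugacy class of $s_0$ bounded: $|s_0^H| \ge |s_0^G|/[G:H]$, so $|s_0^H|$ remains large whenever $|s_0^G|$ is. Your pigeonhole observation (many $G$-conjugates of $s_0$ in one coset $Bs_*$, with ratios in $B^2$) has no obvious purchase either, because conjugates of $s_0$ are not in general commutators of $G$ and so are not covered by the hypothesis $\Comm(G,G) \subset BS$; nothing forces them into $BS$ at all.

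The paper avoids your hard case entirely by choosing \emph{which} $s$ to absorb adaptively rather than fixing $s_0$ in advance. It observes that if any commutator $x$ satisfies $n < |x^G| \le n^{100}$, then $x \in Bs$ for some nontrivial $s \in S$ and automatically $|s^G| \le n^{101}$, so \emph{that} $s$ can be absorbed; iterating (increasing $n$ each time, but only $|S|$ times) one may assume the stability condition~\eqref{eq:B-stability}, a spectral gap in the conjugacy class sizes of commutators. Once stability holds the paper does not recurse to $S = \{1\}$. Instead it sets $H = \langle B\rangle$, bounds $|[H,x]|$ for $x \in B$ (\Cref{23}), picks $x \in B \cap \Comm(G,G)$ of maximal $|x^H|$, and proves (\Cref{24}) that for suitable centralizing $u$ with $ux \in B \cap \Comm(G,G)$ one has $[H,u] \le [H,x]$. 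Writing $x = [x_1, x_2]$ and expanding $[x_1 t_1, x_2 t_2]$ for $t_1, t_2 \in B^2 \cap U_0$, with stability keeping every intermediate commutator inside $B$, this yields $[H, \langle B_0\rangle'] \le R$ for a bounded normal $R$ and boundedly many commutators in $U_0/\langle B_0\rangle$, hence bounded-by-metabelian-by-class-2-by-bounded. This stability-plus-maximal-$x$ argument is the missing core of the proof.
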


\begin{remark}
    \leavevmode
    \begin{enumerate}[(1)]
        \item If $S = \{1\}$, the covering condition states that the commutators in $G$ have boundedly many conjugates.
        The main result of \cite{dieshu} states in this case that $G$ is bounded-by-metabelian.
        We closely follow some parts of \cite{dieshu} to prove the more general but weaker result stated above.
        \item The proof actually shows that $G$ is bounded-by-metabelian-by-class-2-by-bounded.
        We do not need this more detailed information right now, and we will eventually prove the stronger result \Cref{thm:covered-groups} anyway.
        \item Results of Bors, Larsen, and Shalev (see \cite{bors-shalev}*{Corollary~1.5} and references),
        relying on the classification of finite simple groups,
        imply that if $G$ is finite and $d_k(G) \geq \eps > 0$ then
        $[G : \rad(G)]$ is $(k, \eps)$-bounded, where $\rad(G)$ is the soluble radical of $G$,
        and it follows from \eqref{eq:dk-submult} that $\rad(G)$ has $(k, \eps)$-bounded derived length.
        By comparison, in \Cref{prop:bounded-derived-length}, we do not depend on the classification,
        the group $G$ is allowed to be infinite, and there is a stronger conclusion (though it will be superseded by \Cref{thm:covered-groups}).
    \end{enumerate}
\end{remark}

Now assume the hypothesis of \Cref{prop:bounded-derived-length}.
We may assume $1 \in S$ and we will use inducton on $|S|$.
Suppose there is some commutator $x \in \Comm(G, G)$ such that $n < |x^G| \leq n^{100}$.
By the covering condition $x \in Bs$ for some nontrivial $s \in S$.
This implies that $|s^G| \leq n^{101}$.
Then for any $g \in Bs$ we have $|g^G| \leq n^{102}$.
Hence we can remove $s$ from $S$ at the cost of increasing $n$ to $n^{102}$
and then apply induction on $|S|$.
Hence we may assume $B$ satisfies the following stability condition:
\begin{equation}
    \label{eq:B-stability}
    x \in \Comm(G, G) \wedge |x^G| \leq n^{100} \implies x \in B.
\end{equation}

Let $H = \langle B \rangle$.
Given an element $g\in H$, we write $l(g)$ for the minimal number $l$ with the property that $g$ can be written as a product of $l$ elements of $B$.

\begin{lemma}
    [See \cite{dieshu}*{Lemma~2.1}]
    \label{21}
    Let $K\leq H$ be a subgroup of index $m$ in $H$, and let $b\in H$. Then the coset $Kb$ contains an element $g$ such that $l(g)\leq m-1$.
\end{lemma}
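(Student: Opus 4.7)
The plan is a pigeonhole argument on the right Cayley graph of $H$ with respect to the generating set $B$. I would choose $g \in Kb$ minimising $l(g)$, write $g = b_1 b_2 \cdots b_r$ with each $b_i \in B$ and $r = l(g)$, and show that $r \leq m - 1$ by a standard ``cancel a sub-word'' argument.

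The key step is to track the right cosets of the partial products. Consider the $r+1$ right cosets
\[
    K,\ K b_1,\ K b_1 b_2,\ \ldots,\ K b_1 \cdots b_r = Kg = Kb.
\]
If $r \geq m$, then by pigeonhole two of these cosets coincide: there exist $0 \leq i < j \leq r$ with $K b_1 \cdots b_i = K b_1 \cdots b_j$. Right-multiplying by $b_{j+1} \cdots b_r$ gives $Kg = Kg'$, where
\[
    g' = b_1 \cdots b_i b_{j+1} \cdots b_r
\]
is a product of $r - (j-i) < r$ elements of $B$. Thus $g' \in Kb$ with $l(g') < l(g)$, contradicting the minimality of $r$. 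Hence $r + 1 \leq m$, i.e.\ $l(g) \leq m - 1$.

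There is essentially no obstacle here: the argument is purely combinatorial and uses nothing about $B$ beyond the facts that $1 \in B$ and $H = \langle B \rangle$. The only point worth a moment's care is that the cosets must be right cosets throughout, so that right-multiplication by the tail $b_{j+1} \cdots b_r$ preserves the coset identity without needing $K$ to be normal. Since the proof mirrors \cite{dieshu}*{Lemma~2.1}, I would simply cite it if brevity is preferred.
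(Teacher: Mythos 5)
Your proof is correct, and it is the standard argument (pigeonhole on the sequence of right cosets of the partial products, then excise a subword to shorten a minimal-length representative). The paper gives no proof of its own but simply cites \cite{dieshu}*{Lemma~2.1}, so your write-up just fills in what the authors delegate to that reference; there is no divergence of approach to report.

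One small imprecision in your closing remark: to guarantee that $Kb$ contains \emph{some} element of finite $l$-length (so that a minimiser $g$ exists), you need not just $H = \langle B\rangle$ but that the semigroup generated by $B$ already covers $H$ — in practice, that $B$ is symmetric. In the paper's setting $B = \{x : |x^G| \le n\}$ is automatically symmetric and contains $1$, so this is satisfied, but $1 \in B$ alone is not the operative condition.
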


\begin{lemma}\label{23}
    For any $x\in B$ the subgroup $[H,x]$ has $n$-bounded order.
    Consequently, the order of $\langle[H,x]^G\rangle$ is $n$-bounded.
\end{lemma}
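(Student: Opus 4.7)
The plan is to use the FC-group structure of $H = \langle B\rangle$, together with a Schur-Wiegold analysis, first to show $[H,x]$ is finite and then to bound its order in terms of $n$.

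First, I would observe that $T = \{[h,x] : h \in H\}$ has at most $n$ elements, since the map $h \mapsto [h,x]$ factors through $H/C_H(x)$ and $[H:C_H(x)] \leq [G:C_G(x)] = |x^G| \leq n$. Each generator $[h,x] = (x^{-1})^h \cdot x$ is a product of two elements with at most $n$ conjugates in $G$, so $|[h,x]^G| \leq n^2 \leq n^{100}$; by the stability condition \eqref{eq:B-stability} each generator then lies in $B$. So $[H,x]$ is generated by at most $n$ elements of $B$.

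For the finiteness I would appeal to a classical theorem of B.~Neumann: every FC-group has locally finite derived subgroup. Since $B$ consists of elements with finitely many conjugates in $G$, the subgroup $H = \langle B\rangle$ is itself an FC-group, so $H' = [H,H]$ is locally finite; and $[H,x] \leq H'$ is finitely generated, hence finite. For the quantitative bound, the joint centralizer $C_G([H,x]) = \bigcap_{t \in T} C_G(t)$ has index at most $n^n$ in $G$, so $[[H,x] : Z([H,x])] \leq n^n$, and Schur's theorem gives an $n$-bound on $|[H,x]'|$. To bound the abelianization $[H,x]/[H,x]'$, I would apply a Schur-Wiegold style analysis to the normal closure $M = \langle T^G\rangle$: it is generated by at most $n^2$ elements of $B$ and is also a finitely generated subgroup of the locally finite $H'$, hence finite; controlling $|M|$ in terms of $n$ then yields the desired bound on $|[H,x]|$.

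For the ``consequently'' statement, each $[H,x']$ with $x' \in x^G$ has $n$-bounded order by the same argument applied with $x$ replaced by $x'$, and each is normal in $H$. Since $H \nsgp G$ implies $[H,x]^g = [H, x^g]$, the normal closure $\langle [H,x]^G\rangle$ is the product of the at most $n$ normal subgroups $[H,x']$ of $H$, each of $n$-bounded order, so has $n$-bounded order. The hard part of the plan will be nailing down the $n$-bound on the exponent of the abelian quotient $[H,x]/[H,x]'$, which requires careful BFC-style control of the finite normal closure $M$; the finiteness itself drops out easily from local finiteness of $H'$, but making the order bound uniform in $n$ is where the main work lies.
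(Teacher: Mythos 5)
Your opening and closing steps agree with the paper. The count $|T| = [H : C_H(x)] \leq n$ is right, the stability condition \eqref{eq:B-stability} does place each $[h,x]$ in $B$, so $[H,x]$ is generated by at most $n$ elements of $B$; and the ``consequently'' part is fine, since $[H,x]^g = [H, x^g]$, $x^g \in B$ because $B$ is a normal subset, and $|x^G| \leq n$, so the normal closure is a product of at most $n$ normal subgroups of $H$ of bounded order.

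The middle, however, has a genuine gap, which you yourself flagged. Bounding $[[H,x] : Z([H,x])]$, and hence $|[H,x]'|$ by Schur's theorem, does not bound $|[H,x]|$: the centre $Z([H,x])$ can be an arbitrarily large abelian group, and nothing you have established controls the exponent of the abelianization $[H,x]/[H,x]'$. Passing to $M = \langle T^G\rangle$ only reproduces the same obstruction one level up: you can again bound $[M : Z(M)]$ and $|M'|$, but not $|M/M'|$. Schur-type information about a group never bounds its own abelianization, so this route cannot close as described.

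The paper's device is to place $[H,x]$ inside the \emph{derived subgroup of a different, controlled group}. By \Cref{21} one may choose coset representatives $b_1, \dots, b_m$ of $C_H(x)$ in $H$ with $l(b_i) \leq m$, so each $b_i$ is a product of at most $m \leq n$ elements of $B$ and hence $|b_i^G| \leq n^m$. Then $[H,x] = \langle [b_1, x], \dots, [b_m, x]\rangle \leq T'$ where $T = \langle x, b_1, \dots, b_m\rangle$. The group $T$ has boundedly many generators, each with $n$-bounded $G$-conjugacy class, so $[T : Z(T)] \leq [G : C_G(T)] \leq n^{m^2+1}$, and Schur's theorem bounds $|T'|$, hence $|[H,x]|$. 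The idea you were missing is \Cref{21}: it lets you control the representatives $b_i$ themselves, not merely the commutators $[b_i, x]$, and that is what produces a small ambient group whose derived subgroup contains $[H,x]$, sidestepping any need to bound an abelianization directly. (Your appeal to local finiteness of $H'$ is not wrong, but it is superfluous once the quantitative Schur bound on $T'$ is in hand.)
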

\begin{proof}
    Let $m = [H : C_H(x)] = |x^H|$, so $m \leq n$.
    By \Cref{21} we can choose $b_1, \dots, b_m$ such that $x^H = \{x^{b_1}, \dots, x^{b_m}\}$
    and $l(b_i) \leq m$ for each $i$, so $|b_i^G| \leq n^m$.
    Note then that $[H, x]$ is generated by $[b_1, x], \dots, [b_m, x]$.
    Let $T = \langle x, b_1, \dots, b_m\rangle$ and $U = C_G(T)$.
    Then since $Z(T) = T \cap C_G(T)$ we have
    \[
        [T : Z(T)] \leq [G : C_G(T)] \leq n^{m^2+1}.
    \]
    By Schur's theorem, $T'$ is $n$-bounded.
    Since $[H, x] \leq T'$, this gives the first statement.
    
    For the second statement, recall that $[H, x] \nsgp H \nsgp G$.
    The $G$-conjugates of $[H, x]$ are $[H, x^{b_1}], \dots, [H, x^{b_m}]$,
    so $\langle [H, x]^G \rangle = \prod_{i=1}^m [H, x^{b_i}]$, so $|\langle [H, x]^G\rangle| \leq |[H, x]|^m$.
\end{proof}

Now let $X = B \cap \Comm(G, G)$
and pick $x \in X$ so that $m = |x^H|$ is maximal.
Define $b_1, \dots, b_m, T, U$ as in the proof of \Cref{23}.

\begin{lemma}\label{24}
    If $u\in U$ and $ux\in X$, then $[H,u]\leq[H,x]$.
\end{lemma}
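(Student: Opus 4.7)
The plan is to compute $[h, u]$ explicitly for an arbitrary $h \in H$ and recognize it as an element of $[H, x]$. Set $v = ux$; since $u \in U = C_G(T)$, the element $u$ commutes with $x$ and with every $b_i$. The first observation is that $v^{b_i} = u^{b_i} x^{b_i} = u \cdot x^{b_i}$, so the conjugates $v^{b_1}, \dots, v^{b_m}$ are pairwise distinct (because the $x^{b_i}$ are). Hence $|v^H| \geq m$, and since $v \in X$, the maximality of $m$ forces $|v^H| = m$ and $v^H = u \cdot x^H$.

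Now fix $h \in H$. Since $v^h \in v^H = u \cdot x^H$ and $x^h \in x^H$, there are indices $i, j$ (depending on $h$) with $v^h = u \cdot x^{b_i}$ and $x^h = x^{b_j}$. Comparing $v^h = u^h x^h = u^h x^{b_j}$ with $u \cdot x^{b_i}$ yields $u^h = u \cdot x^{b_i} x^{-b_j}$, hence
\[
    [h, u] = (u^h)^{-1} u = x^{b_j} x^{-b_i}.
\]

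Finally, using $x^{b_k} = x [b_k, x]^{-1}$, I would rewrite
\[
    [h, u] = x \bigl( [b_j, x]^{-1} [b_i, x] \bigr) x^{-1}.
\]
The element $w := [b_j, x]^{-1} [b_i, x]$ plainly lies in $[H, x]$, and conjugation by $x^{\pm 1}$ sends $[H, x]$ to itself: since $H \trianglelefteq G$ (because $H = \langle B\rangle$ with $B$ a normal subset of $G$), for any $h' \in H$ we have $[h', x]^{x^{-1}} = [(h')^{x^{-1}}, x] \in [H, x]$. Hence $x w x^{-1} \in [H, x]$, giving $[h, u] \in [H, x]$ for every $h \in H$, which is the claim. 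The only real subtlety is the bookkeeping for the indices $i$ and $j$ arising from the two different conjugation patterns on $v^h$; once that is set up, the rest is a short commutator calculation together with the observation that $[H,x]$, although not obviously normal in $H$, is invariant under conjugation by $x$.
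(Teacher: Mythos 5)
Your proof is correct and follows essentially the same route as the paper: deduce $(ux)^H = u\cdot x^H$ from maximality of $m$, equate $(ux)^h = u^h x^h$ with $u x^{b_i}$, and read off $[h,u]$ as a ratio of $H$-conjugates of $x$. The only difference is cosmetic --- the paper leaves the final step ``$x^{b}x^{-h}\in[H,x]$'' unremarked, whereas you replace $x^h$ by $x^{b_j}$ and spell out the $x$-invariance of $[H,x]$, which is a reasonable amount of extra detail.
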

\begin{proof}
    Since $ux \in X$, $|(ux)^H| \leq m$.
    For each $i=1,\dots,m$ we have $(ux)^{b_i} = ux^{b_i}$.
    Since these elements are distinct it follows that
    \[
        (ux)^H = \{ux^{b_i} : i = 1, \dots, m\}.
    \]
    Thus for any $h \in H$ there is $b \in \{b_1, \dots, b_m\}$ such that $(ux)^h = u^h x^h = ux^b$.
    It follows that $[u, h] = u^{-1} u^h = x^b x^{-h} \in [H, x]$, so the claim holds.
\end{proof}

Let $R=\langle[H, x]^G\rangle$.
By \Cref{23} the order of $R$ is bounded.
By \Cref{24}, $[H, u] \leq R$ whenever $u \in U$ and $ux \in X$.

Let $U_0$ be the normal core of $U$.
Then $[G : U_0]$ is $n$-bounded.
Let $B_0 = B^2 \cap U_0$.

Now write $x=[x_1 ,x_2]$ where $x_1, x_2\in G$.
Let $t_1, t_2 \in B_0$ and consider
\[
    y = [x_1 t_1, x_2 t_2].
\]
We may expand $y$ has a product of four commutators:
\[
    y
    = [x_1, t_2]^{t_1} [x_1, x_2]^{t_2 t_1} [t_1, t_2] [t_1, x_2]^{t_2}.
\]
Since $[x_1, x_2] = x$ and $[x, U_0] = 1$, we may simplify this slightly to
\[
    y
    = x [x_1, t_2]^{t_1} [t_1, t_2] [t_1, x_2]^{t_2}.
\]
Each commutator involving a $t_i$ is in $B^4$, and $x \in B$, so $y \in B^{13}$.
By \eqref{eq:B-stability}, this implies $y \in B$.
Applying \Cref{24}, we find that
\[
    [H, [x_1, t_2]^{t_1} [t_1, t_2] [t_1, x_2]^{t_2}] \leq R,
\]
for all $t_1, t_2, \in B_0$.
Taking $t_2 = 1$ we get $[H, [t_1, x_2]] \leq R$.
Taking $t_1 = 1$ we get $[H, [x_1, t_2]] \leq R$.
Hence $[H, [t_1, t_2]] \leq R$.
Since this holds for all $t_1, t_2 \in B_0$ it follows that
\[
    [H, \langle B_0 \rangle'] \leq R.
\]
Also $[t_1, t_2] \in B$ by \eqref{eq:B-stability},
so $\langle B_0 \rangle' \leq H$.
Hence $\langle B_0 \rangle$ is metabelian mod $R$.

Next suppose $y_1, y_2 \in \Comm(U_0, U_0) \cap Bs$ for some $s \in S$.
Then $y_1 y_2^{-1} \in B^2 \cap U_0 = B_0$.
Hence there are at most $|S|$ commutators in $U_0/\langle B_0\rangle$,
so $(U_0 / \langle B_0 \rangle)'$ has bounded order by Neumann's BFC theorem (\Cref{thm:BFC}).
We can pass to a bounded-index subgroup of $U_0$ whose image in $U_0 / \langle B_0\rangle$ is nilpotent of class at most 2.
Hence $G$ is bounded-by-metabelian-by-class-2-by-bounded.
This proves \Cref{prop:bounded-derived-length}.

\section{From bounded derived length to bounded class}
\label{sec:to-bounded-class}

Let $G$ be a soluble group satisfying the covering condition \eqref{eq:covering-condition}.
In this section we show that $G$ has a nilpotent subgroup of bounded index and class, with bounds depending on $|S|$, $n$, and the derived length $l$ of $G$.

\begin{proposition}
    \label{prop:bounded-class}
    Let $G$ be a group satisfying the covering condition~\eqref{eq:covering-condition}.
    Assume $G$ is soluble of derived length $l$.
    Then $G$ has a nilpotent subgroup $H$ such that $[G:H]$ and the class of $H$ are both $(|S|, n, l)$-bounded.
\end{proposition}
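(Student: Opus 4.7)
The plan is induction on the derived length $l$. The base case $l = 2$ (metabelian) is the heart of the argument and uses an asymmetric Neumann theorem combined with an Engel identity; the inductive step uses P.~Hall's nilpotency criterion to reduce $l$ to smaller values.

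For the inductive step, suppose $l \geq 3$. Apply the inductive hypothesis to $G/G^{(l-1)}$, which has derived length $l - 1$ and satisfies the covering condition by \Cref{lem:covering-subgroups-and-quotients}(2), to get a bounded-index subgroup $H \leq G$ containing $G^{(l-1)}$ with $H/G^{(l-1)}$ nilpotent of bounded class. Separately, $G^{(l-2)}$ is metabelian and inherits the covering condition (with adjusted parameters) by \Cref{lem:covering-subgroups-and-quotients}(1), so the base case gives it a bounded-index subgroup nilpotent of bounded class. After passing to a common bounded-index normal subgroup $M \leq G$ (via normal cores and intersections), apply Hall's criterion: if $N \nsgp M$ with $N$ nilpotent of class $d$ and $M/N'$ nilpotent of class $c$, then $M$ is nilpotent of class bounded in terms of $c$ and $d$. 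Taking $N = M \cap G^{(l-2)}$, the subgroup $N$ is nilpotent of bounded class by the base case, $N' \leq G^{(l-1)}$, and $M/N'$ maps onto a quotient of $H/G^{(l-1)}$ which is nilpotent of bounded class, so Hall concludes.

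For the metabelian base case, the key observation is that $G'$ is abelian, so each $y \in G$ induces a group homomorphism $\phi_y : G' \to G'$ given by $\phi_y(c) = [c, y]$, and iteration gives $\phi_y^k(c) = [c, y, y, \dots, y]$ ($k$ copies of $y$). Proving the class bound therefore reduces, via the classical theorem that soluble groups satisfying a bounded Engel identity are nilpotent of bounded class (Gruenberg, Kim--Rhemtulla, etc.), to producing a bounded-index subgroup $H$ on which $\phi_y^k = 0$ for every $y \in H$ with $k$ uniformly bounded. To extract this identity, I would apply \Cref{thm:metric-neumann} with the conjugacy-class seminorm to the pair $(G, G')$: a pigeonhole over $S$ plus a preliminary passage to a bounded-index subgroup (in the style of \Cref{cor:d2-neumann} and the stability trick used in the previous section) allows one to assume each relevant $s \in S$ has bounded conjugacy-class norm, and then the theorem yields bounded-index normal subgroups $H \leq G$ and $K \leq G'$ with $L = [H, K]$ of bounded size. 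Passing further to $H' = C_H(L)$, which has bounded index since $|\Aut(L)|$ is bounded, one has $\phi_y(K) \subseteq L$ and $\phi_y(L) = 0$ for each $y \in H'$, giving $\phi_y^2(K) = 0$ — a 2-Engel identity on the action of $H'$ on $K$. Iterating this procedure a bounded number of times on the complementary part of $H' \cap G'$ (using the derived-length cap of \Cref{prop:bounded-derived-length} so the process terminates) promotes this into a uniform bounded Engel identity on $H'$.

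The main obstacle is the metabelian base case, and within it the passage from the covering condition (which controls commutators only up to bounded conjugacy complexity) to the genuine annihilation $\phi_y^k = 0$. The asymmetric Neumann theorem handles the first step, but only after a preliminary reduction ensuring the representatives $s \in S$ have bounded conjugacy-class norm, which is nontrivial since $S$ may a priori contain elements of arbitrarily large conjugacy class; this appears to require combining a pigeonhole over $S$ with the stability-of-$B$ argument from the previous section. Once that reduction is in place and the module-theoretic setup is available, the rest is a straightforward linear-algebraic bootstrap.
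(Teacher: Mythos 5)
There are two genuine gaps. The larger one is in your metabelian base case: you invoke a ``classical theorem that soluble groups satisfying a bounded Engel identity are nilpotent of bounded class,'' but for metabelian groups no such uniform class bound exists. The wreath products $W_n = C_p \wr C_p^n$ are metabelian and all $(p+1)$-Engel (one computes $[a, {}_m g] = (t-1)^{m-1}[a,g]$ for $g$ with top-group part $t$, and $(t-1)^p = t^p - 1 = 0$ over $\F_p$), yet the nilpotency class of $W_n$ is $n(p-1)+1$, which is unbounded as $n\to\infty$. So even if you successfully extract a bounded Engel identity on a bounded-index subgroup, you cannot conclude. The paper's Lemma~\ref{engel} \emph{looks} like an ``Engel implies bounded class'' lemma, but it is not a blackbox of this type: its proof crucially uses the bounded-index normal subgroup $T_g$ from the asymmetric Neumann theorem, with $[B_g, T_g]=1$, which lets one write $G = \langle x_1, \dots, x_k\rangle T_g$ with $k$ bounded and then apply Fitting's theorem to $A\langle x_1, \dots, x_k\rangle$. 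Bounded generation modulo a subgroup centralizing $B_g$ is precisely what the wreath products above lack; your proposal does not produce this structure, and the final step ``iterating this procedure \dots promotes this into a uniform bounded Engel identity'' is too vague to assess.

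The inductive step also fails as written. You take $N = M\cap G^{(l-2)}$ and claim Hall's criterion applies because ``$M/N'$ maps onto a quotient of $H/G^{(l-1)}$ which is nilpotent of bounded class.'' But Hall requires $M/N'$ \emph{itself} to be nilpotent, and $M/N'$ is an extension of the abelian group $(M\cap G^{(l-1)})/N'$ by the nilpotent quotient $M/(M\cap G^{(l-1)})$; abelian-by-nilpotent is not nilpotent in general, and bridging exactly that gap is the content of the base case, so you cannot take it for granted here. The paper's reduction instead applies the inductive hypothesis to $G'$ to obtain a characteristic nilpotent subgroup $A \leq G'$ of bounded index and class, and then runs a \emph{secondary} induction on the class of $A$: if $A' \neq 1$, pass to $G/A'$ and apply Hall with $N = A$; once $A$ is abelian, pass to a bounded-index subgroup that is abelian-by-class-2 and reduce, via one more asymmetric-Neumann argument and Hall, to the finite metabelian case (which is then handled by a $p$-group/coprime dichotomy, again not by a blackbox Engel theorem). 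That secondary induction on the class of $A$ is what makes the Hall applications legitimate.
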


The proof of this proposition fills the rest of this section.
Throughout, we say ``bounded'' to mean ``$(|S|, n, l)$-bounded''.

\subsection{Reduction to the finite metabelian case}

Recall Hall's criterion for nilpotency (see \cite{robinson}*{5.2.10}): if $N$ is a normal subgroup of $G$ such that $N$ and $G/N'$ are both nilpotent
then $G$ is nilpotent with class bounded in terms of the classes of $N$ and $G/N'$.

By induction on derived length, $G'$ has a nilpotent subgroup $A$ of bounded index and class.
Replacing $A$ with its normal core in $G'$, we may assume $A$ is normal in $G'$.
Then it follows from Fitting's theorem that the characteristic closure of $A$ has bounded class,
so we may assume $A$ is characteristic in $G'$.
If $A' \neq 1$ then by induction on the class of $A$ there is a nilpotent subgroup of $G / A'$ of bounded index and class.
By Hall's criterion we are done in this case, so we may assume $A$ is abelian.
Since $|G'/A|$ is bounded, we may replace $G$ with a bounded-index subgroup whose image in $G/A$ has class at most 2.

In particular, $G$ is abelian-by-class-2.
By another classical result of Hall, any abelian-by-nilpotent group is locally residually finite (see \cite{robinson}*{15.4.1}).
Applying \Cref{lem:reduction-to-finite,lem:covering-subgroups-and-quotients}, we may assume $G$ is finite.

Let $g \in G$ and assume $g$ is central modulo $A$.
Then the subgroup $[A, g]$ is normal in $G$ and consists of commutators, so $[A, g] \subset BS$.
Note also that $a \mapsto [a, g]$ is a homomorphism $A \to [A, g]$.
Suppose $a, b \in A$ satisfy $[a, g], [b, g] \in Bs$ for some $s \in S$.
Then
\[
    [ab^{-1}, g] = [a, g] [b, g]^{-1} \in B^2.
\]
It follows that
\[
    \P_{a \in A}([a, g] \in B^2) \geq 1/|S|,
\]
so
\[
    \P_{a \in A, h \in G}([a, g, h] = 1) \geq \frac1{|S| n^2}.
\]
Applying \Cref{thm:asymmetric-neumann}, there are subgroups $B_g \leq [A, g]$ and $T_g \leq G$, both normal in $G$,
such that the indices $[[A, g]: B_g]$ and $[G:T_g]$ and the order of $[B_g,T_g]$ are bounded.

Write $G' = \langle b_1, \dots, b_s \rangle A$, where $s$ is bounded.
Since $G/A$ has class 2, $b_1, \dots, b_s$ are central mod $A$.
Write $T_i$ for $T_{b_i}$, and $B_i$ for $B_{b_i}$.
Then $[A,G'] = \prod_{i=1}^s [A, b_i]$.
Let $U = \bigcap_{i=1}^s T_i \cap C_G([B_i, T_i]) \cap C_G([A, b_i] / B_i)$.
Then $[G:U]$ is bounded and $U'$ has class at most 4, since
\begin{align*}
    & \gamma_3(U) \leq A && (\text{since $G/A$ has class 2}), \\
    & [\gamma_3(U), U'] \leq [A, G'] = \prod_{i=1}^s [A, b_i], \\
    & [\gamma_3(U), U', U] \leq \prod_{i=1}^s B_i && (\text{since $U \leq C_G([A, b_i] / B_i$)}),\\
    & [\gamma_3(U), U', U, U] \leq \prod_{i=1}^s [B_i, T_i] && (\text{since $U \leq T_i$}),\\
    & [\gamma_3(U), U', U, U, U] = 1 && (\text{since $U \leq C_G([B_i, T_i]$)}),
\end{align*}
and
\[
    \gamma_5(U') \leq [\gamma_4(U), U', U, U, U, U] = 1.
\]
If we can show that $U / U''$ has a subgroup $H / U''$ of bounded index and class then
$H$ has bounded index in $G$ and it would follow from Hall's criterion
that $H$ has bounded class.
Thus \Cref{prop:bounded-class} is reduced to the finite metabelian case.

For the rest of this section we assume $G$ is a finite metabelian group and $A = G'$.
We will continue to refer to the subgroups $B_g \leq [A, g]$ and $T_g \leq G$,
now valid for any $g \in G$ since $G / A$ is abelian.

\subsection{The case of $p$-groups}

We use the notation $[x, {}_l y]$ for the long commutator $[x, y, \dots, y]$ where $y$ is repeated $l$ times.
If $l = 0$ then $[x, {}_l y] = x$.
An element $y$ of a group $\Gamma$ is \emph{$l$-Engel} if $[x, {}_l y] = 1$ for all $x \in \Gamma$,
and $\Gamma$ is \emph{$l$-Engel} if every $y \in \Gamma$ is $l$-Engel.
The following lemma will be useful.

\begin{lemma}\label{lll}
    Let $\Gamma$ be a metabelian group and suppose that $a,b\in \Gamma$ are $l$-Engel elements. If $x\in\langle a,b\rangle$, then $x$ is $(2l+1)$-Engel.
\end{lemma}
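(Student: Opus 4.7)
The plan is to reduce the problem to commutative ring theory by exploiting the standard module structure on $\Gamma'$. Write $R = \Z[\Gamma/\Gamma']$ and view $M = \Gamma'$ additively as a right $R$-module via $u \cdot \bar g = u^g$ (well-defined because $\Gamma'$ is abelian and hence acts trivially on itself). In this notation $[u, g] = u(\bar g - 1)$ for $u \in M$ and $g \in \Gamma$, and inductively $[u, {}_k g] = u(\bar g - 1)^k$.

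First I would translate the $l$-Engel hypotheses into annihilation statements: restricting $[x, {}_l a] = 1$ to $x \in M \subset \Gamma$ gives $M(\bar a - 1)^l = 0$, and similarly $M(\bar b - 1)^l = 0$. Next, for $x \in \langle a, b\rangle$ the image $\bar x$ lies in the abelian subgroup $\langle \bar a, \bar b\rangle$ of $\Gamma/\Gamma'$, so $\bar x = \bar a^i \bar b^j$ for some $i, j \in \Z$. Since $\bar a, \bar b$ are units in $R$, both $\bar a^i - 1$ and $\bar b^j - 1$ lie in $(\bar a - 1)R$ and $(\bar b - 1)R$ respectively, so
\[
    \bar x - 1 = (\bar a^i - 1)(\bar b^j - 1) + (\bar a^i - 1) + (\bar b^j - 1) \in (\bar a - 1)R + (\bar b - 1)R.
\]
Using commutativity of $R$, the binomial expansion yields
\[
    (\bar x - 1)^{2l} \in (\bar a - 1)^l R + (\bar b - 1)^l R,
\]
since in every term $(\bar a - 1)^k (\bar b - 1)^{2l-k}$ at least one of $k$, $2l - k$ is at least $l$.

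Combining these observations, $M(\bar x - 1)^{2l} = 0$. Finally, for any $y \in \Gamma$ we have $[y, x] \in M$, hence
\[
    [y, {}_{2l+1} x] = [y, x] (\bar x - 1)^{2l} = 0,
\]
i.e., $x$ is $(2l+1)$-Engel. The only minor obstacle I anticipate is bookkeeping between the two forms of the Engel condition (for elements of $M$ versus general elements of $\Gamma$), and the straightforward verification that $(\bar a^i - 1) \in (\bar a - 1)R$ for arbitrary $i \in \Z$; no step is substantive.
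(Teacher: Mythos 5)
Your proof is correct, and it takes a genuinely different route from the paper's. The paper observes that $\Gamma'\langle a\rangle$ and $\Gamma'\langle b\rangle$ are normal nilpotent subgroups of class at most $l$ and then applies Fitting's theorem to conclude that $\Gamma'\langle a,b\rangle$ has class at most $2l$, from which the $(2l+1)$-Engel property of any $x\in\langle a,b\rangle$ follows immediately. You instead work directly with the $\Z[\Gamma/\Gamma']$-module structure on $\Gamma'$, translate the Engel hypotheses into $M(\bar a-1)^l = M(\bar b-1)^l = 0$, and get the same exponent $2l$ by expanding $(\bar x - 1)^{2l}$ in a commutative ring. The two arguments are really the same fact in two languages: Fitting's theorem for a metabelian group \emph{is} essentially the multinomial estimate you write out, and the unstated content of the paper's first sentence (that $\Gamma'\langle a\rangle$ has class $\leq l$) is exactly your computation $M(\bar a-1)^l=0$. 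What the ring-theoretic version buys you is that every step is explicit and self-contained, at the cost of more notation; the paper's version is shorter but treats Fitting's theorem as a black box. One small presentational remark: you reuse the letter $x$ both for the Engel test variable (``restricting $[x,{}_l a]=1$ to $x\in M$'') and for the fixed element of $\langle a,b\rangle$, which is harmless but worth disentangling in a final write-up.
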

\begin{proof}
    Both subgroups $\Gamma'\langle a\rangle$ and $\Gamma'\langle b\rangle$ are normal in $\Gamma$ and nilpotent of class at most $l$.
    By Fitting's theorem, $\Gamma'\langle a,b\rangle$ is nilpotent of class at most $2l$ and the lemma follows.
\end{proof}

Now let $G$ be a finite metabelian $p$-group obeying the covering condition \eqref{eq:covering-condition}.
Let $A = G'$ and $B_g \leq [A, g]$ and $T_g \leq G$ be as in the previous subsection.
Since $|[B_g, T_g]|$ is bounded, there is a bounded $i$ such that $Z_i(G)$ contains $[B_g, T_g]$ for every $g\in G$.
Passing to the quotient $G/Z_i(G)$, we may assume $[B_g, T_g] = 1$ identically.
We may also assume that $B_g = C_{[A, g]}(T_g)$ and $T_g = C_G(B_g)$.

\begin{lemma}
    \label{engel}
    If $G$ is $l$-Engel then $G$ is nilpotent of $(|S|,n,l)$-bounded class.
\end{lemma}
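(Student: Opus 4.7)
The plan is to show that for every $g \in G$, $[A, g] \leq Z_c(G)$ for some $c$ bounded in $|S|$, $n$, $l$. Since $\gamma_3(G) = [A, G]$ is generated by the subgroups $[A, g]$, this gives $\gamma_3(G) \leq Z_c(G)$ and hence $\gamma_{c+3}(G) \leq [Z_c(G), {}_c G] = 1$, i.e.\ nilpotency class at most $c+2$. The key module-theoretic input is that, since $G/A$ is abelian, we may view $A$ and each of its sub- and quotient modules as a $\Z[G/A]$-module, and the $l$-Engel condition translates to the statement that $(y - 1)^l$ annihilates $A$ for every $y \in G/A$.

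First I would show that $B_g \leq Z_{k_1}(G)$ for bounded $k_1$. Because $A \leq C_G(B_g) = T_g$ and $[G : T_g]$ is bounded, the action of $G$ on $B_g$ factors through the abelian $p$-group $Q_1 = G/T_g$ of bounded order. Such a $Q_1$ is generated by a bounded number $r_1$ of elements $\bar q_1, \dots, \bar q_{r_1}$, so the augmentation ideal $I_1$ of $\Z[Q_1]$ is generated as an ideal by $\bar q_1 - 1, \dots, \bar q_{r_1} - 1$. By commutativity and pigeonhole, every monomial in $I_1^{r_1(l-1)+1}$ raises some $\bar q_i - 1$ to a power $\geq l$, and hence annihilates $B_g$ by the Engel condition. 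Thus $B_g \cdot I_1^{k_1} = 0$ with $k_1 = r_1(l-1) + 1$ bounded, which is to say $[B_g, {}_{k_1} G] = 1$, and $B_g \leq Z_{k_1}(G)$.

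The same argument applied to the bounded-order quotient $[A, g] / B_g$ yields $[[A, g], {}_{k_2} G] \leq B_g$ for bounded $k_2$: the action of $G$ on $[A, g]/B_g$ factors through an abelian quotient $Q_2$ of order at most $|\Aut([A, g]/B_g)|$, which is bounded since $|[A, g]/B_g|$ is. Combining, $[[A, g], {}_{k_1 + k_2} G] \leq [B_g, {}_{k_1} G] = 1$, so $[A, g] \leq Z_{k_1+k_2}(G)$ and we may take $c = k_1 + k_2$. I do not anticipate a serious obstacle; the only technical ingredient is the elementary bound $I^{r(l-1)+1} M = 0$ for an $l$-Engel $\Z[Q]$-module $M$ with $Q$ a finite abelian $p$-group of rank $r$, which follows immediately from the commutativity of $\Z[Q]$ and pigeonhole on monomials in the generators.
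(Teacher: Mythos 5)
Your argument is correct, and it follows the same outer skeleton as the paper's proof --- establish $B_g \leq Z_j(G)$ for bounded $j$, promote this to $[A, g] \leq Z_{j'}(G)$ using that $[[A,g]:B_g]$ is bounded, and conclude from $\gamma_3(G) \leq Z_{j'}(G)$ that $G$ has class at most $j'+2$ --- but the mechanism you use for the first step is genuinely different. The paper writes $G = \langle x_1, \dots, x_k\rangle T_g$ with $k$ bounded, observes that each $A\langle x_i\rangle$ is normal in $G$ and nilpotent of class at most $l$ (from the Engel identity in a metabelian group), applies Fitting's theorem to deduce that $A\langle x_1, \dots, x_k\rangle$, hence $B_g\langle x_1, \dots, x_k\rangle$, has bounded class, and then transfers this to $B_g \leq Z_j(G)$ via $[B_g, T_g] = 1$. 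You instead view $B_g$ as a $\Z[G/T_g]$-module (which is legitimate because $T_g = C_G(B_g) \supseteq A$, so $G/T_g$ is abelian and of bounded order), translate the Engel identity into $(\bar q - 1)^l$ annihilating the module, and pigeonhole on monomials in powers of the augmentation ideal. The two routes are morally parallel --- Fitting's theorem on the normal factors $A\langle x_i\rangle$ in a metabelian group plays the same role as the pigeonhole count on monomials in a commutative group ring --- but your module-theoretic phrasing yields an explicit bound $j = r_1(l-1)+1$ in terms of the rank $r_1$ of $G/T_g$, and it also treats the second step (for $[A,g]/B_g$) by a visibly parallel argument, whereas the paper simply invokes boundedness of that quotient. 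Both versions correctly rely on the normalizations $[B_g, T_g] = 1$ and $T_g = C_G(B_g)$ made just before the lemma.
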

\begin{proof}
    Since $G/T_g$ has bounded order, $G = \langle x_1, \dots, x_k\rangle T_g$ for some $x_1, \dots, x_k \in G$ and bounded $k$.
    Since $G$ is metabelian and $l$-Engel, $A\langle x_i\rangle$ has class at most $l$ for each $i$,
    so $A \langle x_1, \dots, x_k \rangle$ has bounded class by Fitting's theorem.
    In particular $B_g \langle x_1, \dots, x_k \rangle$ has bounded class.
    Since $[B_g, T_g] = 1$, this shows that $B_g$ is contained in $Z_j(G)$ for some bounded $j$.
    Since $[A,g] / B_g$ has bounded order, $[A, g]$ is contained in $Z_{j'}(G)$ for bounded $j'$.
    Since $[A, g] \in Z_{j'}(G)$ for all $g$, $\gamma_3(G) \leq Z_{j'}(G)$, so $G$ has class at most $j'+2$.
\end{proof}

Thus it suffices to find a bounded-index subgroup of $G$ which is $l$-Engel for some bounded $l$.
Since $[A, g] / B_g$ is bounded, there is a bounded number $f$ such that $[A, {}_fg] \leq B_g$ for all $g \in G$.
For $g \in G$ and $i \geq 0$ write $B_{i,g}$ for $[B_g, {}_ig]$, and note that $B_{i,g}$ is normal in $G$.
Let $T_{i,g} = C_G(B_{i,g})$.
Then $B_g = B_{0,g} \geq B_{1,g} \geq \cdots$
and $T_g = T_{0,g} \leq T_{1,g} \leq \cdots$.
Let $\beta_i = \max_{g \in G} [G : T_{i,g}]$, so $\beta_0 \geq \beta_1 \geq \cdots$.
Since $\beta_0 = \max_{g \in G} [G : T_g]$ is bounded, there is a bounded number $i$ such that $\beta_i = \beta_{2i+f}$.
Pick $g$ so that $[G : T_{i,g}] = \beta_i = \beta_{2i+f}$.
Then for any $h \in T_{i,g}$ we have
\[
    B_{2i+f, g} = [B_{i,g}, {}_{f+i} g] = [B_{i,g}, {}_{f+i} gh] \leq B_{i,gh},
\]
so $T_{i,gh} \leq T_{2i+f,g} = T_{i,g}$.
Since $[G:T_{i,g}] = \beta_i$, this implies that $T_{i,gh} = T_{i,g}$ whenever $h \in T_{i,g}$.
Hence $T_{i,g}$ centralizes $B_{i,gh}$ for all $h \in T_{i,g}$.

Let $H = T_{i,g}$ and $Z = Z(H)$ and $\bar G = G/Z$.
Let $h \in H$.
Since $Z$ contains $B_{i,gh}$, it follows that $\bar{gh}$ is $(f + i + 1)$-Engel in $\bar G$.
In particular $\bar g$ is $(f+i+1)$-Engel.
Applying \Cref{lll}, $\bar h$ is $(2f+2i+3)$-Engel.
Hence $\bar H$ is $(2f+2i+3)$-Engel.
Applying \Cref{engel}, $H$ is nilpotent of bounded class,
as required.

\subsection{The coprime case}

\begin{lemma}
    Let $A$ be a finite nilpotent group
    and let $\Gamma$ be a group acting on $A$ with kernel $K$.
    Assume $|A|$ and $|\Gamma / K|$ are coprime.
    Then $[A, \Gamma] = [A, \Gamma, \Gamma]$.
\end{lemma}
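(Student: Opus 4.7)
The plan is to reduce the statement to the Fitting-style coprime decomposition $A = C_A(\Gamma) \cdot [A, \Gamma]$, which immediately implies the conclusion. Indeed, if $A = C_A(\Gamma) \cdot [A, \Gamma]$, then for any $c \in C_A(\Gamma)$, $b \in [A, \Gamma]$, and $\gamma \in \Gamma$ we have $[cb, \gamma] = b^{-1} c^{-1} c^\gamma b^\gamma = b^{-1} b^\gamma = [b, \gamma]$ using $c^\gamma = c$, so the subgroup $[A, \Gamma] = [C_A(\Gamma) \cdot [A, \Gamma], \Gamma]$ collapses to $[[A, \Gamma], \Gamma] = [A, \Gamma, \Gamma]$.

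So the main task is to establish $A = C_A(\Gamma) \cdot [A, \Gamma]$. Since $K$ acts trivially, I would replace $\Gamma$ by $Q = \Gamma/K$, so that $Q$ acts faithfully on $A$ with $\gcd(|A|, |Q|) = 1$. As $A$ is finite nilpotent, it is the internal direct product of its Sylow $p$-subgroups $A_p$, each characteristic and hence $Q$-invariant, and both $C_A(Q)$ and $[A, Q]$ decompose accordingly, so I may reduce to the case that $A$ is a $p$-group with $p \nmid |Q|$. The abelian base case is the usual Maschke-type argument: viewing $A$ as a $\mathbf{Z}[Q]$-module, the averaging endomorphism $e = |Q|^{-1} \sum_{q \in Q} q \in \End(A)$ is well-defined (since $|Q|$ is invertible modulo $\exp(A)$), is idempotent, and yields $A = C_A(Q) \oplus [A, Q]$.

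For the inductive step on $|A|$, set $Z = Z(A)$, which is characteristic and thus $Q$-invariant. By induction on $|A|$ applied to $A/Z$ one gets $A/Z = C_{A/Z}(Q) \cdot [A/Z, Q]$. The crux is to lift fixed-points modulo $Z$ to genuine fixed points: given $\bar a \in C_{A/Z}(Q)$ one has $[a, q] \in Z$ for all $q$, and the map $q \mapsto [a, q]$ is a $1$-cocycle $Q \to Z$ by the identity $[a, qq'] = [a, q'] [a, q]^{q'}$; coprime vanishing $H^1(Q, Z) = 0$ makes it a coboundary $[a, q] = z^{-1} z^q$, so $az^{-1} \in C_A(Q)$ lifts $\bar a$. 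Combining this with the abelian splitting $Z = C_Z(Q) \oplus [Z, Q]$ (from the base case applied to $Z$) and the inclusion $[Z, Q] \leq [A, Q]$ gives $A = C_A(Q) \cdot [A, Q]$. The main obstacle is precisely this cohomological lifting; once it is in place everything else is bookkeeping.
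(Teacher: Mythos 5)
Your proof is correct and follows the same initial reductions as the paper (pass to $Q = \Gamma/K$, decompose $A$ into its $Q$-invariant Sylow subgroups, reduce to $A$ a $p$-group and $Q$ a $p'$-group). At that point the paper simply cites Aschbacher (24.5), whereas you give a self-contained proof of that coprime-action lemma: the Maschke averaging idempotent for the abelian base case, then induction on $|A|$ via $Z(A)$ together with vanishing of $H^1(Q, Z)$ to lift fixed points modulo the centre, yielding $A = C_A(Q)\cdot[A,Q]$ and hence $[A,\Gamma]=[A,\Gamma,\Gamma]$. This is one of the standard proofs of Aschbacher (24.5), so the substance matches; the only difference is that you reprove the cited lemma rather than quoting it.
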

\begin{proof}
    We may assume $K = 1$ and $|A|$ and $|\Gamma|$ are coprime.
    Since $A$ is the direct product of its Sylow subgroups, which are preserved by $\Gamma$,
    we may assume $A$ is a $p$-group for some prime $p$,
    and so $\Gamma$ is a $p'$-group.
    Now see \cite{aschbacher}*{(24.5)}.
\end{proof}

We now consider the case of \Cref{prop:bounded-class} in which $G$ is finite and metabelian,
$G / G'$ is a $p$-group, and $G'$ is a $p'$-group.
As before let $A = G'$ and let $B_g, T_g$ be normal subgroups of $G$ such that $B_g \leq [A, g]$,
the indices $[[A, g] : B_g]$ and $[G : T_g]$ and the order of $[B_g, T_g]$ are bounded.
Then $C_G([B_g, T_g])$ has bounded index in $G$,
so we may additionally assume that $T_g$ centralizes $[B_g, T_g]$ by replacing $T_g$ with $C_{T_g}([B_g, T_g])$ if necessary.
Similarly we may assume $T_g$ centralizes $[A, g] / B_g$.
But then two applications of the lemma show
\[
    [A, g, T_g] = [A, g, T_g, T_g] \leq [B_g, T_g] = [B_g, T_g, T_g] = 1.
\]
Hence we may assume $B_g = [A, g]$ and $T_g = C_G([A, g])$.

Choose $g \in G$ so that $[G : T_g]$ is maximal.
Let $h \in T_g$.
By the lemma,
\[
    [A, g] = [A, g, g] = [A, g, gh] \leq [A, gh].
\]
It follows that $T_{gh} \leq T_g$.
Since $[G : T_g]$ is maximal, $T_{gh} = T_g$.
Hence $T_g$ centralizes both $[A, g]$ and $[A, gh]$, and $[A, h] \leq [A, g][A, gh]$, so $T_g$ centralizes $[A, h]$.
Since this holds for every $h \in T_g$, $[A, T_g, T_g] = 1$.
Since $T_g' \leq A$, it follows that $T_g$ is nilpotent (and in fact abelian, since $G$ has no nonabelian nilpotent subgroups).

\subsection{Finite metabelian groups in general}

Let $G$ be a finite metabelian group satisfying \eqref{eq:covering-condition}.
Let $e = n!$. Then $G^e$ centralizes $B$.
The group $G^e/Z(G^e)$ has at most $|S|$ commutators so it has a subgroup of bounded index of nilpotency class at most 2,
so $G^e$ has a bounded-index subgroup of class at most 3.

Since $G / G'$ is finite and abelian it is the direct product of its Sylow $p$-subgroups.
For each prime $p \leq n$ let $G_p$ be the preimage in $G$ of the Sylow $p$-subgroup of $G / G'$.
For $p > n$, the Sylow $p$-subgroups of $G$ are contained in $G^e$.
Hence $G = G^e \prod_{p \leq n} G_p$,
so by Fitting's theorem it suffices to prove the result for $G = G_p$.
Thus we may assume $G / G'$ is a $p$-group.

Let $P$ be a Sylow $p$-subgroup of $G$.
Then $G = G' P$.
By the subsection on $p$-groups, we can replace $P$ with a subgroup of bounded index and bounded class,
so without loss of generality $P$ has bounded class $c$ say.
Since the action of $G$ on $G'$ factors through $P$, it follows that $[P \cap G', {}_c G] = 1$,
so $P \cap G' \leq Z_c(G)$.
Factoring out $Z_c(G)$, we may thus assume $G'$ is a $p'$-group.
By the result of the coprime section we are done.

\section{Bounded-class groups}
\label{sec:to-class-4}

We need the following result from \cite{FM},
which is a local version of a classical result of Baer.

\begin{theorem}[\cite{FM}*{Theorem~B}]
    \label{thm:local-baer}
    Let $G$ be a group such that $n = [\gamma_s(G) : Z_t(G) \cap \gamma_s(G)]$ is finite.
    Then $[\gamma_{s+1}(G) : Z_{t-1}(G) \cap \gamma_{s+1}(G)]$ is finite and $(n,s)$-bounded.
\end{theorem}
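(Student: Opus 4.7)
The theorem is a local refinement of Baer's classical theorem that $[G : Z_t(G)] < \infty$ implies $|\gamma_{t+1}(G)| < \infty$, and the plan is to reduce to the case $t = 1$ by a quotient argument and then adapt the usual Schur/Baer-style commutator calculation.

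First I would pass to $\bar G = G / Z_{t-1}(G)$, in which the image of $Z_t(G)$ lies in $Z(\bar G)$. The hypothesis translates into $[\gamma_s(\bar G) : \gamma_s(\bar G) \cap Z(\bar G)] \leq n$, and since $\gamma_{s+1}(\bar G) \cong \gamma_{s+1}(G)/(\gamma_{s+1}(G) \cap Z_{t-1}(G))$, the conclusion is equivalent to showing $|\gamma_{s+1}(\bar G)|$ is $(n,s)$-bounded. This reduces the problem to the case $t = 1$ and automatically explains why the final bound depends only on $n$ and $s$.

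For the $t = 1$ case, write $H = \gamma_s(G)$, $Z = Z(G)$, and $[H : H \cap Z] \leq n$. Choose a transversal $h_1, \dots, h_n$ of $H \cap Z$ in $H$. Since $[H \cap Z, G] = 1$ and $[h_i k, g] = [h_i, g]$ for $k \in Z$, the subgroup $\gamma_{s+1}(G) = [H, G]$ is generated by the $G$-conjugates of the elements $[h_i, g]$, and its image in $G/Z$ lies in $\bar H = H/(H \cap Z)$ and so has order at most $n$. Thus $[\gamma_{s+1}(G) : \gamma_{s+1}(G) \cap Z]$ is already $n$-bounded, and only $|\gamma_{s+1}(G) \cap Z|$ needs further control.

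For this last step, let $C = C_G(\bar H)$, which has index at most $n!$ in $G$ and satisfies $[H, C] \leq H \cap Z$, so that $c \mapsto [h_i, c]$ is a genuine homomorphism $C \to H \cap Z$ for each $i$. Writing $G = C T$ with a transversal $T$ of size at most $n!$, the identity $[h, ct] = [h, t][h, c]$ (valid because $[h, c] \in Z$) lets one express $\gamma_{s+1}(G)$ as the product of $[H, C] = \prod_i [h_i, C]$ (a product of $n$ central subgroups of $G$) with the normal closure of the finite set $\{[h_i, t] : i \leq n, t \in T\}$. The generators $[h_i, t]$ have only $(n,s)$-boundedly many $G$-conjugates modulo a central tail, so B.~Neumann's BFC theorem (\Cref{thm:BFC}) bounds the latter subgroup; the central subgroups $[h_i, C]$ can be bounded by iterating the same argument after passing to suitable quotients. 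The main obstacle is the bookkeeping needed to combine these two bounds into a single $(n,s)$-bounded estimate for $|\gamma_{s+1}(G) \cap Z|$; this is the standard (but delicate) inductive step that underlies every proof of a Baer-type theorem, and ultimately relies on an induction on $s$ whose base case $s = 1$ is Schur's theorem.
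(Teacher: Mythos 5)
The paper does not actually prove this theorem: it cites it as \cite{FM}*{Theorem~B}, so there is no proof in the paper to compare against. I will therefore assess your argument on its own terms.

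Your reduction to $t=1$ by passing to $\bar G = G/Z_{t-1}(G)$ is correct and is clearly the right first move: $Z(\bar G) = Z_t(G)/Z_{t-1}(G)$, the index $[\gamma_s(\bar G) : \gamma_s(\bar G) \cap Z(\bar G)]$ equals $n$, and $|\gamma_{s+1}(\bar G)| = [\gamma_{s+1}(G) : \gamma_{s+1}(G) \cap Z_{t-1}(G)]$, so the conclusion transfers exactly. The first half of your $t=1$ argument is also fine: since $[H,G]Z/Z \leq HZ/Z$, you get $[\gamma_{s+1}(G) : \gamma_{s+1}(G) \cap Z(G)] \leq n$ essentially for free.

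The gap is in the remaining step, which you yourself flag as the ``main obstacle'' — and it is not a matter of bookkeeping, it is the entire content of the theorem. Your plan is to bound the normal closure of $\{[h_i,t]\}$ by \Cref{thm:BFC} and the central piece $[H,C] = \prod_i [h_i, C]$ by ``iterating the same argument.'' The BFC step is circular: $[h_i,t]^g = [h_j,t'][h_j,c]$ for some $j, t', c$, so the $G$-conjugates of $[h_i,t]$ are bounded only \emph{modulo} $[H,C]$, which is precisely the other subgroup you are trying to bound; and even modulo $[H,C]$ the generators may have infinite order, so Dietzmann/BFC does not apply without further input. The ``iteration'' for the central piece is never specified, and there is no obvious quotient that makes $[h_i,C]$ smaller. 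Most tellingly, nothing in your $t=1$ argument uses that $H = \gamma_s(G)$ rather than an arbitrary normal subgroup with $[H : H\cap Z(G)] \leq n$ — but the statement is false at that level of generality. For instance, let $G$ be the free class-$2$ group of exponent $p$ on generators $x_1, x_2, \dots$, so $Z(G) = G'$; then $H = G'\langle x_1 \rangle$ is normal with $[H : H\cap Z(G)] = p$, yet $[H, G] = \langle [x_1, x_i] : i \geq 2\rangle$ is infinite. So any proof must exploit the verbal structure of $\gamma_s(G)$ in an essential way (this is presumably also why $s$ appears in the bound), and your closing remark that the argument ``ultimately relies on an induction on $s$'' is the right instinct but is never set up. As it stands the proposal identifies the correct reduction and the correct obstruction, but does not close the gap.
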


Now we will complete the proofs of \Cref{thm:d2-theorem,thm:covered-groups}.
By \Cref{cor:d2-neumann}, it suffices to prove \Cref{thm:covered-groups}.
By \Cref{prop:bounded-derived-length,prop:bounded-class}, $G$ has a subgroup of bounded index and bounded nilpotency class.
Replacing $G$ with this subgroup, we may assume $G$ has bounded class, say $c$.
In particular $G$ is locally residually finite, so by \Cref{lem:reduction-to-finite,lem:covering-subgroups-and-quotients} we may assume $G$ is finite.
We will show by induction on $c$ that $G$ has a subgroup $H$ such that $[G : H]$ and $|\gamma_4(H)|$ are both $(n, |S|, c)$-bounded ($G$ is bounded-by-class-3-by-bounded);
if we can show this then, since $C_H(\gamma_4(H))$ has class 4, we are done.
By induction $G / Z(G)$ has a subgroup $H / Z(G)$ such that $[G : H]$ and $[\gamma_4(H) : \gamma_4(H) \cap Z(G)]$ are both bounded.
Applying \Cref{thm:local-baer}, $|\gamma_5(H)|$ is bounded.
Hence, passing to the section $H / \gamma_5(H)$, we may assume $G$ has class 4.
Moreover, by the converse of \Cref{thm:metric-neumann}, $d_2(G)$ is bounded away from zero.
Hence the proof is completed by the following proposition.

\begin{proposition}
    Let $G$ be a finite group such that $\gamma_5(G) = 1$ and $d_2(G) \geq \eps > 0$.
    Then $G$ has a subgroup $H$ such that $[G : H]$ and $|\gamma_4(H)|$ are both $\eps$-bounded.
\end{proposition}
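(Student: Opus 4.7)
Since $\gamma_5(G)=1$, the subgroup $\gamma_4(G)$ is central and abelian in $G$, and the quadruple commutator descends to a well-defined multilinear map $F: (G/G')^4\to\gamma_4(G)$, $F(x,y,z,w)=[x,y,z,w]$. The hypothesis $d_2(G)\geq\eps$ gives $\P_{x,y,z}([x,y,z]=1)\geq\eps$, and since $[x,y,z]=1$ forces $F(x,y,z,w)=1$, we have $\P(F=0)\geq\eps$. The plan is to extract structural information about $F$ from its bias via \Cref{thm:bias-structure} and \Cref{lem:rank-refinement}, then to collapse $\gamma_4(H)$ using the Hall--Witt identity.

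The first step is to produce two bounded-rank expressions for $F$ of the types required by \Cref{lem:rank-refinement}. Applying \Cref{thm:bias-structure} directly to $F$ yields an expression $F=\sum_{\emptyset\neq I\subset[4]}G_I(g_I(x_I),x_{I^c})$ with $|\cod(g_I)|$ being $\eps$-bounded. Separately, the triple commutator $T(x,y,z)=[x,y,z]\bmod\gamma_4(G)$ is a trilinear map $(G/G')^3\to\gamma_3(G)/\gamma_4(G)$ satisfying $\P(T=0)\geq\eps$, so applying \Cref{thm:bias-structure} to $T$ and then pairing the resulting expression with $w$ via $(c,w)\mapsto[c,w]$ gives a second expression for $F$ in which only types $I\subset[3]$ appear; this is an $\II$-type expression. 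To massage the first expression into a $\JJ$-type expression, I would use the Hall--Witt cyclic identity $F(x,y,z,w)+F(y,z,x,w)+F(z,x,y,w)=0$ (which holds since $\gamma_5(G)=1$) together with antisymmetry of $F$ in positions $1,2$ to eliminate terms of types $\{1,3\}$, $\{2,3\}$, $\{1,2,3\}$, keeping $\{1,2\}$ as the sole representative of its cyclic orbit.

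With both expressions in place, \Cref{lem:rank-refinement} gives, after passing to a bounded-index subgroup $H\leq G$, a canonical form
\[
F(x,y,z,w)=F_1(f_1(x,y),z,w)+F_2(f_2(x,z),f_2'(y,w))+F_3(f_3(x,w),f_3'(y,z))
\]
with $f_1,f_2,f_2',f_3,f_3'$ of $\eps$-bounded codomain. The $F_2$ and $F_3$ terms contribute only a bounded subgroup to $\gamma_4(H)$, since both of their arguments lie in bounded abelian groups. For the $F_1$ term, substituting the canonical form back into the Jacobi and antisymmetry identities, the $F_2,F_3$ contributions cancel up to bounded error, leaving constraints of the form ``$F_1(f_1(x,y)+f_1(y,x),z,w)$ is bounded'' and ``the cyclic sum of $F_1(f_1(x,y),z,w)$ over $(x,y,z)$ is bounded''. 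Exploiting these constraints, and possibly passing to a further bounded-index subgroup, should force $F_1|_{H^4}$ to take values in a bounded subgroup as well. Since $\gamma_4(H)$ is generated by the image of $F|_{H^4}$, this would give $|\gamma_4(H)|$ $\eps$-bounded. The main obstacle is precisely this last step: fully exploiting the Jacobi-derived constraints on $F_1$ to pin its image down to a bounded subgroup is where the ``harsh restriction'' imposed by Jacobi on bounded-rank expressions must be used most carefully, with some attention possibly required for small-prime characteristic issues.
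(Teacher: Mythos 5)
Your overall strategy matches the paper's: pass to the associated graded Lie ring, derive two bounded-rank expressions for $F(x,y,z,w)=[x,y,z,w]$ of types $\II$ and $\JJ$, apply \Cref{lem:rank-refinement}, and then collapse. The $\II$-expression you describe (apply \Cref{thm:bias-structure} to $T=[x,y,z]$ and pair with $w$) is exactly the paper's. But there is a genuine gap in your route to the $\JJ$-expression, and it is the crux of the whole argument.

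You propose to start from a generic bounded-rank expression for $F$ (which has terms of every type $I\subset[4]$) and to eliminate the forbidden types $\{1,3\},\{2,3\},\{1,2,3\}$ using the cyclic identity $F(x,y,z,w)+F(y,z,x,w)+F(z,x,y,w)=0$ together with antisymmetry in the first two slots. This cannot work: the $3$-cycle on positions $1,2,3$ fixes the subset $\{1,2,3\}$, so applying that identity turns a $\{1,2,3\}$-type term $G(g(x,y,z),w)$ into a sum of other $\{1,2,3\}$-type terms, never into $\JJ$-type terms; and the transposition $(1\,2)$ also fixes $\{1,2,3\}$ (it merely swaps $\{1,3\}\leftrightarrow\{2,3\}$). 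No combination of these symmetries removes the $\{1,2,3\}$-term, and \Cref{lem:rank-refinement} genuinely needs it gone.

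The paper instead uses the other form of the Jacobi identity, $[x,y,z,w]=[x,y,w,z]+[x,y,[z,w]]$. The first summand is handled by the $\II$-expression with $z$ and $w$ swapped (types $\{1\},\{2\},\{4\},\{1,2\},\{1,4\},\{2,4\},\{1,2,4\}$). The second summand, $[x,y,[z,w]]$, contributes the types $\{3,4\},\{1,3,4\},\{2,3,4\},\{1,2,3,4\}$ --- \emph{but only if} the trilinear map $\gg_1\times\gg_1\times\gg_2\to\gg_4$ has a bounded-rank expansion, which is not automatic from $d_2(G)\geq\eps$. The paper establishes the separate bias fact $\P_{x,y\in\gg_1,\,u\in\gg_2}([x,y,u]=0)\geq\eps$ by a dedicated averaging argument (choosing a good shift $z_0$ and using that $z\mapsto[x,y,z]$ is a homomorphism on $G'$). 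Your proposal contains no version of this second bias statement, and it is the indispensable ingredient you are missing.

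A secondary, smaller issue: your endgame (``exploit Jacobi constraints to force $F_1$ into a bounded subgroup'') is a hope rather than an argument. After \Cref{lem:rank-refinement} the paper's route is cleaner and does not try to bound the image of $F_1$ directly: quotient out the bounded images of $F_2,F_3$; observe that $f_1(x,y)=f_1(x',y')$ forces $[x,y]-[x',y']\in Z_2(\hh)$, so $H$ has only boundedly many commutators modulo $Z_2(H)$; apply B.~Neumann's BFC theorem to bound $[\gamma_2(H):Z_2(H)]$; and finish by applying \Cref{thm:local-baer} twice.
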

\begin{proof}
    In this proof, ``bounded'' means ``$\eps$-bounded''.
    Let $\gg = \gg_1 \oplus \gg_2 \oplus \gg_3 \oplus \gg_4$ be the graded Lie ring associated to $G$,
    where $\gg_i = \gamma_i(G) / \gamma_{i+1}(G)$ for $1 \leq i \leq 4$.
    Since $d_2(G) \geq \eps$,
    \begin{equation}
        \label{eq:g1^3}
        \P_{x, y, z \in \gg_1}([x, y, z] = 0) \geq \eps.
    \end{equation}
    We claim that also
    \begin{equation}
        \label{eq:g1^2xg2}
        \P_{x, y \in \gg_1, z \in \gg_2}([x, y, z] = 0) \geq \eps.
    \end{equation}
    Since
    \[
        \P_{x, y, z_0 \in G, z \in G'}([x, y, zz_0] = 1) = \P_{x, y, z \in G} ([x, y, z] = 1) \geq \eps,
    \]
    there is some $z_0 \in G$ such that
    \[
        \P_{x, y \in G, z \in G'} ([x, y, zz_0] = 1) \geq \eps.
    \]
    Pick $z_1 \in G'$ such that $[x, y, z_1z_0] = 1$.
    Now for $z \in G'$ we have
    \[
        [x, y, zz_0] = [x, y, z] [x, y, z_0],
    \]
    and for fixed $x$ and $y$ the map $z \mapsto [x, y, z]$ is a homomorphism for $z \in G'$,
    so
    \[
        [x, y, zz_0] = 1 \implies [x, y, zz_1^{-1}] = 1.
    \]
    It follows that
    \[
        \P_{z \in G'}([x, y, zz_0] = 1) \leq \P_{z \in G'}([x, y, z] = 1).
    \]
    Hence
    \[
        \P_{x, y \in G, z \in G'}([x, y, z] = 1) \geq \P_{x, y \in G, z \in G'} ([x,y,zz_0] = 1) \ge \eps.
    \]
    Descending to $\gg$ gives \eqref{eq:g1^2xg2}.
    
    From \eqref{eq:g1^3} and \eqref{eq:g1^2xg2} and \Cref{thm:bias-structure}, we have have expressions
    \begin{align*}
        [x, y, z] &= F_1(x, y, z) + \cdots + F_7(x, y, z) && (x, y, z \in \gg_1)\\
        [x, y, z] &= F'_1(x, y, z) + \cdots + F'_7(x, y, z) && (x, y \in \gg_1, z \in \gg_2),
    \end{align*}
    where
    \begin{equation}
        \label{eq:F1-F7}
        \begin{aligned}
        F_1(x, y, z) &= G_1(g_1(x), y, z),\\
        F_2(x, y, z) &= G_2(g_2(y), x, z),\\
        F_3(x, y, z) &= G_3(g_3(z), x, y),\\
        F_4(x, y, z) &= G_4(g_4(x, y), z),\\
        F_5(x, y, z) &= G_5(g_5(x, z), y),\\
        F_6(x, y, z) &= G_6(g_6(y, z), x),\\
        F_7(x, y, z) &= G_7(g_7(x, y, z)),
        \end{aligned}
    \end{equation}
    likewise for $F'_1, \dots, F'_7$,
    and where $|\cod(g_i)|$ and $|\cod(g'_i)|$ are $\eps$-bounded for each $i \in \{1, \dots, 7\}$.
    Hence we obtain two expressions for $[x, y, z, w]$:
    \begin{equation}
        \label{eq:expr1}
        [x, y, z, w] = \sum_{i=1}^7 [F_i(x, y, z), w]
    \end{equation}
    and, by the Jacobi identity,
    \begin{align}
        [x, y, z, w] 
       &= [x, y, w, z] + [x, y, [z, w]] \nonumber\\
       &= \sum_{i=1}^7 [F_i(x, y, w), z] + \sum_{i=1}^7 F_i'(x, y, [z, w]). \label{eq:expr2}
    \end{align}
    Note that \eqref{eq:expr1} only has terms of the forms
    \begin{align*}
        &G(g(x), y, z, w), G(x, g(y), z, w), G(x, y, g(z), w), \\
        &G(g(x, y), z, w), G(g(x, z), y, w), G(g(y, z), x, w), \\
        &G(g(x, y, z), w);
    \end{align*}
    while \eqref{eq:expr2} only has terms of the forms
    \begin{align*}
        &G(g(x), y, z, w), G(x, g(y), z, w), G(x, y, z, g(w)), \\
        &G(g(x, y), z, w), G(g(x, w), y, z), G(g(y, w), x, z), G(x, y, g(z, w)), \\
        &G(g(x, y, w), z), G(g(x, z, w), y), G(g(y, z, w), x), \\
        &G(g(x, y, z, w)).
    \end{align*}
    Applying \Cref{lem:rank-refinement}, there is a bounded-index subgroup $\hh_1 \leq \gg_1$ such that for $(x, y, z, w) \in \hh_1^4$
    we have an expression for $[x, y, z, w]$ of the form
    \[
        [x,y,z,w] = F_1(f_1(x, y), z, w) + F_2(f_2(x, z), f'_2(y, w)) + F_3(f_3(x, w), f'_3(y, z)),
    \]
    where $f_1, f_2, f_2', f_3, f_3'$ have bounded codomains.
    The latter two terms take values in a bounded part of $\gg_4$, which we may quotient out, so without loss of generality
    \[
        [x, y, z, w] = F_1(f_1(x, y), z, w).
    \]
    Now if $f_1(x, y) = f_1(x', y')$ we have $[x, y] - [x', y'] \in Z_2(\hh)$,
    where $\hh$ is the subring generated by $\hh_1$.
    
    Let $H$ be the preimage of $\hh_1$ in $G$.
    Then $[G:H] = [\gg_1:\hh_1]$.
    For $x \in H$ write $\bar x$ for the image in $\hh_1$.
    Then for $x, y, x', y' \in H$ if $f_1(\bar x, \bar y) = f_1(\bar{x'}, \bar{y'})$ then $[x, y] [x', y']^{-1} \in Z_2(H)$.
    Hence there are only boundedly many commutators in $\gamma_2(H) / Z_2(H)$,
    so $[\gamma_2(H) : Z_2(H)]$ is bounded by Neumann's BFC theorem (\Cref{thm:BFC}).
    Applying \Cref{thm:local-baer} twice, $|\gamma_4(H)|$ is bounded, as claimed.
\end{proof}

\begin{remark}
    Let $G$ be a finite group with $d_2(G) \geq \eps > 0$ as in the statement of \Cref{thm:d2-theorem}.
    We have established the existence of a subgroup $H$ of class 4 such that $[G:H]$ and $|\gamma_4(H)|$ are bounded.
    The covering condition and \Cref{lem:covering-subgroups-and-quotients} (or \eqref{eq:dk-submult}) shows that $d_2(H / \gamma_4(H))$ is bounded away from zero,
    so by replacing $G$ with $H / \gamma_4(H)$ we may assume $G$ has class 3.
    Now consider the expression
    \[
        [x,y,z] = F_1(x, y, z) + \cdots + F_7(x, y, z) \qquad (x, y, z \in G / G')
    \]
    established in the proof above, where $F_1, \dots, F_7$ are given by \eqref{eq:F1-F7}.
    By passing to the bounded-index subgroup $\ker g_1 \cap \ker g_2 \cap \ker g_3$ we can kill off $F_1, F_2, F_3$.
    The image of $F_7$ is contained in a bounded-size subgroup of $\gamma_3(G)$, so we may quotient it out.
    Thus we are left with
    \[
        [x, y, z] = G_4(g_4(x, y), z) + G_5(g_5(x, z), y) + G_6(g_6(y, z), x).
    \]
    Conversely, suppose $G$ has class 3 and the triple commutator has an expression of this form.
    Then for any $x \in G$, the proportion of $y \in G$ such that $g_4(x, y) = 0$ is at least $|\cod(g_4)|^{-1}$,
    and for any $x, y \in G$ the proportion of $z \in G$ such that $g_5(x, z) = 0$ and $g_6(y, z) = 0$ is at least $|\cod(g_5)|^{-1} |\cod(g_6)|^{-1}$.
    It follows that $d_2(G) \geq |\cod(g_4)|^{-1} |\cod(g_5)|^{-1} |\cod(g_6)|^{-1}$.
\end{remark}
    
\bibliography{refs}
\bibliographystyle{plain}

\end{document}